\theoremstyle{plain}
\newtheorem{lemma}{Lemma}[section]
\newtheorem{proposition}[lemma]{Proposition}
\newtheorem{theorem}[lemma]{Theorem}
\newtheorem{cor}[lemma]{Corollary}
\newtheorem*{prop_A}{Proposition A}
\theoremstyle{definition}
\newtheorem{remark}[lemma]{Remark}
\newtheorem{claim}[lemma]{Claim}
\title[Jones representations]{Remarks on   \\ the faithfulness  
of the Jones representations}
\author[Y. Kasahara]{Yasushi Kasahara}
\address{
Department of Mathematics \\
  Kochi University of Technology \\ Tosayamada,  Kami City, Kochi \\ 
  782-8502 Japan 
}
\email{kasahara.yasushi@kochi-tech.ac.jp}
\dedicatory{Dedicated to Professor Shigeyuki Morita \\ on his
60th birthday}
\begin{document}
%
%
\newcommand{\Q}{\mathbb Q}
\newcommand{\C}{\mathbb C}
\newcommand{\D}{\mathcal{D}}
\newcommand{\g}{\gamma}
\newcommand{\G}{\Gamma}
\newcommand{\gr}{\mathcal{G}}
\newcommand{\Gd}{\gr \delta}
\newcommand{\HE}{\mathcal H}
\newcommand{\K}{\mathcal K}
\newcommand{\h}{\mathfrak h}
\newcommand{\I}{{\mathcal I}}
\renewcommand{\S}{\Sigma}
\newcommand{\sgn}{\operatorname{sgn}}
\newcommand{\rank}{\operatorname{rank}}
\newcommand{\Sp}{\operatorname{Sp}}
\newcommand{\PSp}{\operatorname{PSp}}
\newcommand{\GL}{\operatorname{GL}}
\newcommand{\ctr}{\operatorname{Center}}
\newcommand{\Int}{\operatorname{Int}}
\newcommand{\Homeo}{\operatorname{Homeo}}
\renewcommand{\L}{\mathcal L}
\newcommand{\M}{{\mathcal M}}
\newcommand{\N}{{\mathcal N}}
\newcommand{\p}{\psi}
\newcommand{\x}{\xi}
\newcommand{\z}{\zeta}
\newcommand{\Z}{{\mathbb Z}}
\newcommand{\Aut}{\operatorname{Aut}}
\newcommand{\End}{\operatorname{End}}
\renewcommand{\hom}{\operatorname{Hom}}
\renewcommand{\ker}{\operatorname{Ker}}
\newcommand{\im}{\operatorname{Im}}
\newcommand{\var}[1]{\varphi^{(#1)}}
\newcommand{\IW}{{H(q, n)}}
\newcommand{\PSL}{\operatorname{PSL}}
\newcommand{\F}{\Z/p\Z}
%
%
\begin{abstract}
We consider the linear representations of the mapping class group of an 
$n$--punctured $2$--sphere constructed by V. F. R. Jones using Iwahori--Hecke 
algebras of type A. We show that their faithfulness is equivalent to that of 
certain related Iwahori--Hecke algebra representation of Artin's braid group 
of $n-1$ strands. In the case of  $n=6$, we provide a further restriction for the 
kernel using our previous result, as well as a certain relation to the Burau 
representation of degree $4$. 
\end{abstract}

\maketitle

\section{Introduction}
\par 

A linear representation of a group is said to be faithful if it is injective as 
a homomorphism of the group into the corresponding group of  linear 
transformations. 
In the seminal paper \cite{jones},  V. F. R. Jones  constructed a family of 
linear representations of $\M_0^n$, the mapping class group of an $n$--punctured 
$2$--sphere with  one parameter. Each of the family is obtained as a modification of the 
Iwahori--Hecke algebra representation of $B_n$, Artin's  braid group of $n$ strands, provided with a {\em rectangular} Young diagram with $n$ boxes. 
Except for the trivial cases, which correspond to the Young diagrams $[1^n]$ or $[n]$, 
it  remains open whether these Jones representations of $\M_0^n$ 
are faithful or not. 
It might be therefore possible that some of these Jones representations gives a 
{\em naturally defined faithful} linear representation of $\M_0^n$, which seems 
missing even after the work of 
Korkmaz \cite{korkmaz}, and Bigelow--Budney \cite{bigelow-budney}, who 
independently constructed a faithful linear representation of $\M_0^n$ as an 
{\em induced} representation of  a faithful representation of a certain subgroup of finite 
index defined by modifying 
the Lawrence--Krammer representation of $B_{n-1}$, the faithfulness of which 
was established by the celebrated works of  Bigelow \cite{bigelow} and Krammer \cite{krammer}.
On the other hand, if there exists a nontrivial element  in the kernel of a Jones representation of $\M_0^n$, probably of infinite order,  it might be expected to give  a nontrivial knot 
with the same value of the Jones polynomial as the unknot. 
\par

In this note, we show, for arbitrary $n \geq 6$,  that the faithfulness of the Jones 
representation of $\M_0^n$ corresponding to the rectangular Young diagram $Y$ is equivalent to that of the 
Iwahori--Hecke algebra representation of  the braid group $B_{n-1}$ which corresponds 
to the {\em unique} Young diagram obtained from $Y$ by removing a single box. 
We note that the faithfulness problem for the latter representation also remains open.
Furthermore, in the case of $n=6$, we apply our previous result 
\cite{kasahara:1} to obtain a  restriction for the kernel of the Jones representation of $\M_0^6$
in terms of the mapping class group of genus $2$  via the Birman--Hilden theory. 
The case $n < 6$ is mentioned in Remark \ref{exception} (1), and its details 
are discussed elsewhere \cite{kasahara:3}.
\par

\section{Preliminaries}
 \par
 We fix some notation and briefly recall necessary material to describe Jones' construction.
 \par
 \subsection{Mapping class groups of genus $0$}
 \par
Let  $D^2$ be a $2$--disk, $P$ the set of distinct $n$ points $p_1$, $p_2$, \ldots, $p_n$ in 
 $\Int D^2$. We call $P$ the set of ``punctures". We denote by $D_n$  the pair of spaces $(D^2, P)$.
 The $n$--strand braid group $B_n$ is defined as $\pi_0 \Homeo^+ ( D_n; \partial D_n )$, the mapping class group of $D_n$. Namely, it is  the group of 
the orientation preserving homeomorphisms of $D_n$ which restrict to the identity on the boundary, 
modulo the isotopy in the same class of homeomorphisms. 
\par

We choose a simple closed curve in $\Int D^2$ so that the disk component of its complement
intersects with $P$ at $P_0 = \{ p_1, \ldots, p_{n-1} \}$. Identifying the bounding $2$--disk with $D^2$ 
again, the inclusion defines $D_{n-1} \hookrightarrow D_n$, which induces an injective homomorphism 
$$ i: B_{n-1} \to B_n $$
by extending the homeomorphisms with the identity on $D_n \smallsetminus D_{n-1}$.
\par

Next, capping off a $2$--disk on $\partial D^2$, we obtain a $2$--sphere $S^2$. 
We choose a single point $p_{n+1}$ in the interior of the added disk, and set $P_+ = P \cup \{ p_{n+1} \}$.
As in the case of $D^2$,  we set $S_n = ( S^2, P )$ and $S_{n+1} = ( S^2, P_+ )$, and define their 
mapping class groups as $\M_0^n = \pi_0 \Homeo^+ ( S_n )$ and 
$\M_0^{n+1} = \pi_0 \Homeo^+ ( S_{n+1} )$, respectively. 
We denote by $\M_0^{n+1} ( p_{n+1} )$ the subgroup of $\M_0^{n+1}$ consisting of those mapping classes 
which fix $p_{n+1}$. By ``forgetting" $p_{n+1}$, we obtain the natural surjective homomorphism 
$$ p: \M_0^{n+1} ( p_{n+1} ) \to \M_0^n $$
The inclusion $D_n \hookrightarrow S_{n+1}$ induces the homomorphism $j_n : B_n \to \M_0^{n+1}$ by 
again extending the homeomorphism with the identity. 
\begin{proposition} \label{prop:exact}
	For $n \geq 2$, there exists the following short exact sequence:
		\begin{equation}
	 0\; \xrightarrow \;  \Z\; \xrightarrow \; B_n\; \xrightarrow{j_n} 
	 \; \M_0^{n+1}(p_{n+1})\; \xrightarrow \; 1
           \label{exact}
      \end{equation}
      Here, the image of $\mathbb{Z}$ in $B_{n}$ is generated by the single element 
      $D_{\partial}$, the Dehn twist about  the simple closed curve parallel to the 
      boundary, and coincides with $\ctr(B_{n})$ if  $n \geq 3$.  If $n = 2$,   
      $\ctr(B_{2})$ coincides with $B_{2}$ and the image of $\mathbb{Z}$ is 
      an index $2$ subgroup of $\ctr(B_{2})$.
\end{proposition}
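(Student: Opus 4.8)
The plan is to identify $\M_0^{n+1}(p_{n+1})$ with the mapping class group of the $n$-times-marked plane and to read \eqref{exact} off a configuration-space fibration. To begin with, $S^2\smallsetminus\{p_{n+1}\}$ is homeomorphic to $\mathbb{R}^2$, and every self-homeomorphism of $\mathbb{R}^2$ is proper, hence extends over the one-point compactification fixing the added point. Restriction therefore identifies the group of orientation-preserving homeomorphisms of $S_{n+1}$ fixing $p_{n+1}$ with $\Homeo^+(\mathbb{R}^2,P)$, the group of orientation-preserving homeomorphisms of $\mathbb{R}^2$ preserving the set $P$ (with no condition near infinity); and hence, as is standard for a fixed marked point, $\M_0^{n+1}(p_{n+1})\cong\pi_0\Homeo^+(\mathbb{R}^2,P)$.

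Next I would use the evaluation map $q\colon\Homeo^+(\mathbb{R}^2)\to C_n(\mathbb{R}^2)$, $f\mapsto f(P)$, onto the unordered configuration space of $n$ points in $\mathbb{R}^2$. The isotopy extension theorem provides local sections of $q$, so it is a locally trivial principal bundle with structure group $\Homeo^+(\mathbb{R}^2,P)$, and part of its homotopy long exact sequence reads
\[
\pi_1\Homeo^+(\mathbb{R}^2)\xrightarrow{\;q_*\;}\pi_1 C_n(\mathbb{R}^2)\xrightarrow{\;\delta\;}\pi_0\Homeo^+(\mathbb{R}^2,P)\longrightarrow\pi_0\Homeo^+(\mathbb{R}^2).
\]
Here $\pi_1 C_n(\mathbb{R}^2)\cong B_n$ by the classical description of the braid group, while $\Homeo^+(\mathbb{R}^2)\simeq SO(2)$, so $\pi_0\Homeo^+(\mathbb{R}^2)=0$ and $\pi_1\Homeo^+(\mathbb{R}^2)\cong\Z$, generated by the class of the loop of rotations about a fixed center. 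Consequently $\delta$ is a surjective homomorphism and $\pi_0\Homeo^+(\mathbb{R}^2,P)\cong B_n/\im q_*$.

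The remaining point is to compute $\im q_*$ and to match $\delta$ with $j_n$. The generator of $\pi_1\Homeo^+(\mathbb{R}^2)$ is sent by $q_*$ to the braid obtained by rotating the configuration $P$ once about its center, that is, to the full twist, which is precisely $D_{\partial}$; hence $\im q_*=\langle D_{\partial}\rangle$. (That $D_{\partial}$ lies in $\ker j_n$ also has the direct reason that in $S_{n+1}$ the curve $\partial D^2$ bounds a disk containing only the puncture $p_{n+1}$, and a Dehn twist about such a curve is isotopically trivial.) On the other hand, a homeomorphism of $D^2$ that is the identity on $\partial D^2$, once extended by the identity, is supported in $D^2$, so under the lift used to define $\delta$ it traces in $C_n(\mathbb{R}^2)$ exactly the braid it represents; unwinding the identifications then shows that $\delta$, viewed as a map $B_n\to\M_0^{n+1}(p_{n+1})$, is $j_n$. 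Thus $j_n$ is onto with $\ker j_n=\langle D_{\partial}\rangle$. Since the exponent-sum homomorphism $B_n\to\Z$ is nonzero on $D_{\partial}$ (it sends $D_{\partial}$ to $n(n-1)$), the element $D_{\partial}$ has infinite order, so $\langle D_{\partial}\rangle\cong\Z$ and the short exact sequence \eqref{exact} follows.

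It remains to record the statements about centers. For $n\geq 3$, $D_{\partial}$ equals the full twist, and by Chow's classical theorem $\ctr(B_n)$ is infinite cyclic and generated by the full twist, so $\ctr(B_n)=\langle D_{\partial}\rangle$. For $n=2$, $B_2\cong\Z$ is abelian, so $\ctr(B_2)=B_2$, while $D_{\partial}$ is twice a generator of $B_2$ and hence generates a subgroup of index $2$. The one genuinely non-routine input in this plan is the homotopy type of $\Homeo^+(\mathbb{R}^2)$ (namely that $\pi_1\Homeo^+(\mathbb{R}^2)\cong\Z$ with the rotation loop as generator), together with the observation that this loop maps to the single full twist $D_{\partial}$ in $B_n$ rather than to some other power; granting this, the rest (the fibration property, $\pi_1 C_n(\mathbb{R}^2)\cong B_n$, and the identification with $\M_0^{n+1}(p_{n+1})$) is standard. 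Alternatively, \eqref{exact} can be recognized as an instance of the capping-homomorphism exact sequence for mapping class groups, obtained by capping $\partial D^2$ with a once-marked disk, whose only substantive hypothesis is that $D_{\partial}$ have infinite order, already verified; in that approach surjectivity of $j_n$ would instead be proved by a change-of-coordinates argument using Alexander's trick and the uniqueness of disk neighborhoods.
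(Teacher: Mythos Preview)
Your proposal is correct and goes well beyond what the paper does: the paper does not prove this proposition at all, but simply records (in the remark immediately following the statement) that for $n\ge 3$ it is \cite[Lemma~2.2]{bigelow-budney}, while for $n=2$ it follows from the elementary fact that $\M_0^3$ is the symmetric group on three letters (so the sequence collapses to $0\to 2\Z\to B_2\cong\Z\to\M_0^3(p_3)\cong\Z/2\to 1$).

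Your configuration-space/Birman-type fibration argument is one of the standard routes to such capping exact sequences, and the alternative you sketch at the end (the capping homomorphism for mapping class groups) is the other; either is more informative than a bare citation. The one point you correctly flag as non-routine is the homotopy type of $\Homeo^+(\mathbb{R}^2)$: strictly speaking you only need $\pi_0\Homeo^+(\mathbb{R}^2)=0$ and that the rotation loop generates the image of $\pi_1\Homeo^+(\mathbb{R}^2)$ in $B_n$, which is $\langle D_\partial\rangle$. A small caution: the bijection $\Homeo^+(\mathbb{R}^2)\cong\Homeo^+(S^2,p_{n+1})$ obtained from one-point compactification is an isomorphism of abstract groups, but matching the compact-open topologies (and hence passing isotopies across it) deserves a sentence, since convergence on compacta in $\mathbb{R}^2$ does not automatically control behavior near $\infty$. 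This does not affect the validity of your conclusion, and at the level of $\pi_0$ the identification $\M_0^{n+1}(p_{n+1})\cong\pi_0\Homeo^+(\mathbb{R}^2,P)$ goes through.
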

\begin{remark}
	In the case of $n \geq 3$, this proposition is nothing but 
	\cite[Lemma 2.2]{bigelow-budney}.  In the case of $n = 2$, the proposition 
	follows from the fact that $\M_{0}^{3}$ is isomorphic to the symmetric 
	group of  three letters.
\end{remark}
\par

Finally, we denote by $k: B_{n} \to \M_{0}^{n}$ the homomomorphism induced by 
the inclusion $D_{n} \hookrightarrow S_{n}$, and obtain the commutative diagram:
$$\xymatrix{
		&	{B_n}  \ar@{->>}[d]_j \ar[dr]^k	\\
	{B_n / \langle D_\partial \rangle} \ar@{=}[r]	& {\M_0^{n+1}(p_{n+1})} \ar[r]_-{p}  & {\M_0^n}
	}$$
where $\langle D_\partial \rangle$ denotes the subgroup generated by 
$D_\partial$.
\subsection{Iwahori--Hecke algebra representations of $B_n$}
\par

We denote by $H(q, n)$ the Iwahori--Hecke algebra of type $A_{n-1}$.
As its ground ring, we take $\Q(q)$, the quotient field of the polynomial ring $\Q[q]$
with $q$ an indeterminate. Let $\sigma$ be the right-handed half  twist about 
an arbitrary embedded arc in $\text{Int} D^2$ joining two distinct points of $P$
such that no interior points of the arc intersect with $P$. Such  $\sigma$ is 
unique up to conjugation in $B_n$. Then $H(q, n)$ can be defined as the quotient 
of the group ring $\Q(q) [B_n]$ of $B_n$ by the two--sided ideal generated by the single 
element $(\sigma - 1)( \sigma + q)$. Hence the projection defines a natural 
multiplication--preserving mapping $B_n \to H(q, n)$. From any representation of 
$H(q, n)$, this mapping gives rise to a representation of $B_n$.
\par

Here, we summarize the facts necessary in this note about $H(q, n )$.
For details, we refer to \cite{mathas}.
\begin{proposition} \label{fact-1}
As a $\Q(q)$--algebra, $H(q, n)$ is semisimple. Furthermore, 
all the irreducible representations of $H(q, n)$ are in one-to-one correspondence 
with all the  Young diagrams with $n$ boxes.
\end{proposition}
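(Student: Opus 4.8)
The statement is classical and is treated in detail in \cite{mathas}; here is the route I would take. Fix the standard $\Q(q)$--basis $\{T_w\}_{w \in S_n}$ of $H(q,n)$ indexed by the symmetric group $S_n$, so that $\dim_{\Q(q)} H(q,n) = n!$; the standard braid generators $\sigma_1, \dots, \sigma_{n-1}$ of $B_n$ map to elements $T_1, \dots, T_{n-1}$ satisfying the braid relations together with the quadratic relation $(T_i - 1)(T_i + q) = 0$. The whole plan is to produce enough irreducible modules, prove they are pairwise non-isomorphic, and check the Wedderburn dimension count against $n!$.

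For each Young diagram, equivalently each partition, $\lambda$ with $n$ boxes I would construct the Specht (cell) module $S^\lambda$ via Murphy's standard basis $\{m_{\mathfrak t}\}$, indexed by the standard Young tableaux $\mathfrak t$ of shape $\lambda$; thus $\dim_{\Q(q)} S^\lambda = f^\lambda$, the number of such tableaux. Each $S^\lambda$ carries a canonical $H(q,n)$--invariant symmetric bilinear form $\langle\,,\,\rangle_\lambda$ arising from the cellular structure of $H(q,n)$. The crucial step is to show that the Gram determinant $\det\bigl(\langle m_{\mathfrak s}, m_{\mathfrak t}\rangle_\lambda\bigr)_{\mathfrak s,\mathfrak t}$, a priori an element of $\Q[q]$, is nonzero. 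Since $q$ is an indeterminate it suffices to exhibit one value of $q$ at which it does not vanish, and at $q = 1$ the algebra degenerates to $\Q[S_n]$ and $S^\lambda$ to the ordinary Specht module, on which the form is non--degenerate precisely because $\Q[S_n]$ is semisimple. (If one wants the sharper statement one may instead quote the James--Murphy-type product formula for this determinant.) Consequently every $\langle\,,\,\rangle_\lambda$ is non--degenerate over $\Q(q)$.

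Now the general theory of cellular algebras of Graham--Lehrer applies: once all the cell bilinear forms are non--degenerate, the cell modules $\{S^\lambda : \lambda \vdash n\}$ form a complete set of pairwise non-isomorphic irreducible $H(q,n)$--modules and $H(q,n)$ is split semisimple. The dimension bookkeeping is then consistent, since $\sum_{\lambda \vdash n} (f^\lambda)^2 = n!$ by the Robinson--Schensted correspondence, matching $\dim_{\Q(q)} H(q,n)$; and the Young diagrams with $n$ boxes are exactly the partitions of $n$, which yields the asserted bijection. An alternative, more structural, argument avoids the Gram determinant entirely: work with the Hecke order $H_A(n)$ over $A = \Q[q^{\pm 1}]$, observe that specialization $q \mapsto 1$ gives $\Q[S_n]$, which is split semisimple with irreducibles indexed by partitions of $n$, and invoke Tits' deformation theorem to transport both semisimplicity and the indexing set to the generic fibre $H(q,n)$.

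I expect the main obstacle to be the semisimplicity assertion, i.e.\ the non-vanishing of the Gram determinants: a genuinely self-contained proof must pass through the explicit product formula for these determinants, whereas the deformation-theoretic route sidesteps the computation at the cost of invoking Tits' theorem together with flatness of the Hecke order over $A$. Granted semisimplicity, the parametrization by Young diagrams is then a matter of combinatorial bookkeeping with standard tableaux.
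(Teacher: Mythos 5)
The paper offers no proof of this proposition at all—it is stated as a known classical fact with a pointer to the reference \cite{mathas}—and your sketch is precisely the standard argument developed there (Murphy's cellular basis, non-degeneracy of the Gram forms verified by specializing at $q=1$ to the semisimple group algebra $\Q[\mathfrak{S}_n]$, then Graham--Lehrer theory), with the Tits deformation theorem as the well-known alternative route. Your outline is correct and complete at the level of detail one could reasonably expect; there is nothing in the paper to compare it against beyond the citation.
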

Let $Y$ be a  Young diagram with $n$ boxes. 
We denote by $V_Y$ the representation space of the corresponding representation
of $H(q, n)$, and by $\pi_Y : B_n \to \GL(V_Y)$ the  representation obtained
 from $V_Y$ as above. The identity element of $\GL(V_Y)$ will be denoted by  $I$.
 As usual,  we adopt the notation that $\pi_{[n]}$ denotes the one--dimensional scalar representation $\sigma \mapsto q \cdot I$, which coincides with the notation of \cite{jones}.
Then the representation $\pi_{[1^n]}$ is the one--dimensional representation
defined by $\sigma \mapsto -I$. We call this representation $\sgn$.

\begin{proposition} \label{fact-2}
Let $Y$ be an arbitrary Young diagram with $n$ boxes. We denote  all the distinct 
Young diagrams obtained from $Y$ by removing a single box 
by $Y_0^1$, $Y_0^2$, \ldots, $Y_0^s$. Then the representation of $B_{n-1}$ obtained as the composition of $\pi_Y$ with the injection $i: B_{n-1} \to B_n$ is 
equivalent to the direct sum 
$\pi_{Y_0^1} \oplus \pi_{Y_0^2} \oplus \cdots \oplus \pi_{Y_0^s}$ as 
representations over $\Q(q)$.
\end{proposition}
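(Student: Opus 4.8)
The plan is to reduce the statement to the classical branching rule for Iwahori--Hecke algebras and then to prove that rule using the seminormal (Young) basis indexed by standard tableaux.

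First I would make explicit the algebra-level picture behind the map $i\colon B_{n-1}\to B_n$. Choose the embedded arcs defining the standard generators $\sigma_1,\dots,\sigma_{n-1}$ of $B_n$ so that $\sigma_1,\dots,\sigma_{n-2}$ are supported in the subdisk $D_{n-1}$ carrying the punctures $p_1,\dots,p_{n-1}$; then, by construction of the inclusion $D_{n-1}\hookrightarrow D_n$, the homomorphism $i$ carries the standard generators of $B_{n-1}$ to $\sigma_1,\dots,\sigma_{n-2}$. Passing to group rings and to the Hecke quotients, $i$ induces an algebra homomorphism $H(q,n-1)\to H(q,n)$ onto the parabolic subalgebra generated by $T_1,\dots,T_{n-2}$, where $T_k$ denotes the image of $\sigma_k$; since the elements $\{T_w\}_{w\in S_{n-1}}$ remain $\Q(q)$--linearly independent inside $H(q,n)$, this homomorphism is injective. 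Hence $\pi_Y\circ i$ is, up to this identification, exactly the restriction $\operatorname{Res}^{H(q,n)}_{H(q,n-1)} V_Y$ of the irreducible $H(q,n)$--module $V_Y$, and it suffices to prove the Hecke--algebra branching rule $\operatorname{Res}^{H(q,n)}_{H(q,n-1)} V_Y \cong \bigoplus_{i=1}^{s} V_{Y_0^i}$ over $\Q(q)$.

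For the branching rule itself I would invoke the seminormal basis of $V_Y$, available because $H(q,n)$ is (split) semisimple over $\Q(q)$ by Proposition \ref{fact-1}; see \cite{mathas}. This basis $\{v_{\mathfrak t}\}$ is indexed by the standard Young tableaux $\mathfrak t$ of shape $Y$, and $T_1,\dots,T_{n-1}$ act on it by explicit formulas depending only on the relative positions of consecutive entries. Deleting from $\mathfrak t$ the box occupied by the entry $n$ yields a standard tableau of shape $Y_0^i$ precisely for the index $i$ for which that box is the removable box in question; this partitions the index set as $\bigsqcup_{i=1}^{s}\{\mathfrak t : n \text{ occupies the } i\text{-th removable box of } Y\}$. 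The operators $T_1,\dots,T_{n-2}$ never involve the entry $n$, so they preserve each block of this partition, and on a given block the seminormal formulas are literally the seminormal formulas defining $V_{Y_0^i}$; hence the span of each block is an $H(q,n-1)$--submodule isomorphic to $V_{Y_0^i}$, and $V_Y$ is their direct sum. (Alternatively one can sidestep seminormal forms and deduce the rule from the $q=1$ specialization, i.e.\ the classical $S_{n-1}\subset S_n$ branching rule, via the Tits deformation theorem, using semisimplicity over $\Q(q)$.)

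The main obstacle is really the bookkeeping of the first paragraph: verifying that the geometrically defined inclusion $i$ is the \emph{standard} parabolic inclusion at the level of generators, with no stray permutation or half--twist intervening, and that the induced map of Hecke algebras is injective with the expected parabolic image. Once that identification is secured, the representation-theoretic content is the well-documented branching rule for $H(q,n)$, for which the seminormal basis furnishes an essentially calculation-free argument.
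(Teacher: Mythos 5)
The paper states this proposition without proof, simply quoting it as a known fact with a pointer to \cite{mathas}; your argument --- identifying the geometric inclusion $i$ with the standard parabolic embedding $H(q,n-1)\hookrightarrow H(q,n)$ and then proving the branching rule via the seminormal basis indexed by standard tableaux --- is correct and is essentially the standard argument that the cited reference supplies. No gaps.
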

Next, let $\mathfrak{S}_n$ denote the permutation group of $n$ letters, and 
$\nu : B_n \to \mathfrak{S}_n$ the homomorphism induced by the permutation of 
the puncture set $P$. 
\begin{proposition} \label{fact-3}
Let $Y$ be a  Young diagram with $n$ boxes. Then the specialization of the  representation $\pi_Y$ of $B_n$ at $q=1$  descends via $\nu$ to the irreducible representation of 
$\mathfrak{S}_n$ over $\Q$  which corresponds to  the same Young 
diagram $Y$.
\end{proposition}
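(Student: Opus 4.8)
The plan is to reduce everything to well--known properties of $H(q,n)$ and its Specht modules; the only real issue is to make precise what \emph{the specialization of $\pi_Y$ at $q=1$} means. Setting $q=1$ in the defining relation $(\sigma-1)(\sigma+q)$ gives $\sigma^2=1$, so that if $A=\Q[q]_{(q-1)}$ (a discrete valuation ring with residue field $\Q$) and $H_A=\bigoplus_{w\in\mathfrak{S}_n}A\,T_w$ is the standard $A$--form of $H(q,n)$, then $H_A\otimes_A\Q$ is the quotient of $\Q[B_n]$ by the relations $\sigma_i^2=1$ together with the braid relations, i.e.\ it is canonically $\Q[\mathfrak{S}_n]$ via $\sigma_i\mapsto s_i=(i,i+1)$. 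The natural multiplication--preserving map $B_n\to H(q,n)$ has image in $H_A$, and its reduction modulo $q-1$ is therefore $\nu:B_n\to\mathfrak{S}_n$ followed by $\mathfrak{S}_n\hookrightarrow\Q[\mathfrak{S}_n]$. Hence any $H(q,n)$--module that admits an $H_A$--lattice yields, upon $\otimes_A\Q$, a representation of $\mathfrak{S}_n$ through which the $q=1$ specialization of the corresponding $B_n$--representation factors via $\nu$.

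To apply this to $V_Y$, recall Young's seminormal form: it realizes $V_Y$ on a basis indexed by the standard Young tableaux of shape $Y$ in which the matrix of each $\sigma_i$ has entries that are rational functions of $q$ with denominators dividing products of quantum integers $[m]_q=1+q+\dots+q^{m-1}$ with $m\geq 1$, which are regular and nonzero at $q=1$ (see \cite{mathas}). Thus $V_Y$ carries an $H_A$--lattice $L_Y$, and $\bar\pi_Y:=L_Y\otimes_A\Q$ is a genuine representation of $\mathfrak{S}_n$ of dimension $\dim V_Y$ (the number of standard tableaux of shape $Y$). Moreover, because $H_A\otimes_A\Q=\Q[\mathfrak{S}_n]$ is semisimple, the class of $L\otimes_A\Q$ in the Grothendieck group is independent of the chosen $H_A$--lattice $L\subset V_Y$ (the usual argument underlying the decomposition map), and semisimplicity then upgrades this to an isomorphism; so $\bar\pi_Y$ depends, up to isomorphism, only on $\pi_Y$.

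It remains to identify $\bar\pi_Y$ with the Specht representation of $\mathfrak{S}_n$ attached to $Y$. For this I would use the Hecke--algebra Specht module $S^Y$, defined combinatorially over $\Z[q]\subseteq A$. On one hand $S^Y\otimes_A\Q(q)\cong V_Y$, since $H(q,n)$ is semisimple over $\Q(q)$ (Proposition \ref{fact-1}) and the cell module labelled $Y$ is then the simple module $V_Y$; so $S^Y$ (extended to $A$) is one of the $H_A$--lattices considered above. On the other hand the construction of $S^Y$ is compatible with the base change $q\mapsto1$, and at $q=1$ it reduces verbatim to the classical construction of the Specht module of $\mathfrak{S}_n$ indexed by $Y$, which over $\Q$ is irreducible. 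Combining the two, $\bar\pi_Y\cong S^Y\otimes_A\Q$ is the irreducible $\Q[\mathfrak{S}_n]$--module labelled by $Y$; together with the first paragraph this is exactly the claim.

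The principal difficulty here is organizational rather than computational: setting up a well--posed notion of specialization and matching it with $S^Y$ reduced at $q=1$. A more hands--on alternative for the identification, avoiding explicit Specht modules, is induction on $n$ using Proposition \ref{fact-2}: in the seminormal basis the branching $\pi_Y|_{B_{n-1}}\cong\bigoplus_k\pi_{Y_0^k}$ is compatible with the chain $H(q,1)\subset\cdots\subset H(q,n)$ and hence descends to $q=1$, where it matches the classical branching rule for $\mathfrak{S}_n$. But restriction to $\mathfrak{S}_{n-1}$ alone fails to separate partitions of $n$ with the same number of standard tableaux, so this route needs in addition the joint eigenvalues of the Jucys--Murphy elements --- which at $q=1$ degenerate to the contents of the boxes of $Y$ --- to pin down $S^Y$, essentially the Okounkov--Vershik description of the irreducible representations of $\mathfrak{S}_n$.
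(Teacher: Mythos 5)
Your argument is essentially correct, but note that the paper offers no proof of this statement at all: Proposition \ref{fact-3} is listed among the standard facts about $\IW$ for which the author simply refers to Mathas's book \cite{mathas}. What you have written is a reasonable reconstruction of exactly the material that reference contains --- the decomposition map at $q=1$ over the discrete valuation ring $A=\Q[q]_{(q-1)}$, the identification $H_A\otimes_A\Q\cong\Q[\mathfrak{S}_n]$ sending $T_{s_i}\mapsto s_i$ (so that the specialization factors through $\nu$), and the identification of the reduction of the cell/Specht module $S^Y$ with the classical Specht module for $Y$. Two small remarks. First, the appeal to Young's seminormal form is more than you need for the existence of an $H_A$-lattice: since $A$ is a DVR and $H_A$ is a finitely generated $A$-algebra, the $H_A$-submodule of $V_Y$ generated by any $\Q(q)$-basis is automatically a free $A$-lattice; the seminormal form is only one convenient choice. (It does, however, give the cleanest route to the final identification, and your caveat that its denominators are quantum integers, nonzero at $q=1$, is the right point to check.) Second, the identification of the reduction with the Specht module labelled by $Y$ rather than its conjugate depends on a labelling convention; it is worth observing that the paper's normalization --- $\pi_{[n]}(\sigma)=q\cdot I$ and $\pi_{[1^n]}(\sigma)=-I$, specializing at $q=1$ to the trivial and sign representations respectively --- is consistent with the classical labelling, so your identification does match the statement as the paper intends it. Your closing paragraph on the Jucys--Murphy alternative is a fair assessment of why the branching rule alone does not suffice, but it is not needed given the Specht-module argument.
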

\par

\subsection{Jones' construction.} \label{construction}
\par

Let $Y$ be an arbitrary Young diagram with 
$n$ boxes, and $\pi_Y$ the corresponding irreducible representation of $B_n$ with the represnentation space $V_Y$. We denote by  $d$ ($=d_Y$) the dimension of 
$V_Y$ over $\Q(q)$. According to the analysis in \cite{jones}, the Dehn twist 
$D_\partial$ along the boundary curve is mapped under $\pi_Y$ to the scalar 
$q^{rn(n-1)/d}$.  Here, $r$ is a certain non-negative integer defined as 
$\rank{( I + \pi_{Y}(\sigma) )}$. A  precise combinatorial 
description of $r$ can be found in \cite{jones}. Important here is the fact that $r$ is 
equal to $0$ if and only if $Y = [ 1^{n} ]$. We also remark that $rn(n-1)/d$ is always
an integer.
\par

We now adjust  $\pi_{Y}$ by rescaling so that the image of $D_{\partial}$ 
becomes trivial,  appealing to the well-known fact that the abelianization 
of $B_{n}$ is $\Z$. For that purpose, we need the formal power $q^{1/d}$.
To avoid confusion, we introduce another indeterminate corresponding to 
$q^{1/d}$.  Let $t$ be another indeterminate. We consider the $\Q$--algebra 
homomorphism $\Q(q) \to \Q(t)$ defined by $q \mapsto t^d$. This homomorphism 
naturally gives rise to a structure of a $\Q(q)$--algebra on $\Q(t)$. From now on, 
we consider every representation over $\Q(q)$ also as that over $\Q(t)$ by coefficient extension.
\par

We denote the representation obtained as the composition of   the abelianization 
of $B_{n}$ with the mapping 
$m \in \Z \mapsto ( t^{-r} )^{m}$ by
$$ \alpha :  B_{n} \to \GL( \Q( t ) ).$$
Clearly, $\alpha$ is trivial if and only if $r = 0$, {\em i.e.}, if and only if
$Y = [ 1^{n} ]$.
We now consider the representation $\alpha \otimes_{\Q(q)} \pi_Y$.
Its representation space is  $M_Y = \Q(t) \otimes_{\Q(q)} V_Y$
which is a $d$-dimensional  vector space over $\Q(t)$. 
Since the abelianization maps $D_{\partial}$ to $n ( n - 1 ) \in \Z$, we have
$\alpha \otimes_{\Q(q)} \pi_Y ( D_{\partial} ) = I$. Therefore, by Proposition 
\ref{prop:exact}, the representation $\alpha \otimes_{\Q(q)} \pi_Y$ descends 
 via $j_{n}$ to that of $\M_0^{n+1}(p_{n+1})$. The condition that this 
 representation further descends to that of  $\M_0^n$ is given in \cite{jones}:
\begin{proposition} 
\label{sphere-jones}
Via the homomorphism $k: B_{n} \to \M_0^n$, 
the representation $\alpha \otimes_{\Q(q)} \pi_Y$ descends to that of $\M_0^n$ 
if and only if $Y$ is {\em rectangular}.
\end{proposition}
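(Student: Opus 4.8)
The plan is to analyze when the representation $\alpha \otimes_{\Q(q)} \pi_Y$ of $\M_0^{n+1}(p_{n+1})$ factors through the forgetful map $p : \M_0^{n+1}(p_{n+1}) \to \M_0^n$. By the commutative diagram relating $j_n$, $k$, and $p$, the representation descends via $k$ to $\M_0^n$ precisely when $\ker p$ is contained in the kernel of $\alpha \otimes_{\Q(q)} \pi_Y$ (viewed on $\M_0^{n+1}(p_{n+1})$, equivalently on $B_n/\langle D_\partial\rangle$). So first I would identify $\ker p$ explicitly. Forgetting the puncture $p_{n+1}$ on $S^2$, the Birman exact sequence identifies $\ker p$ with (a quotient of) $\pi_1$ of the $n$-punctured sphere $S_n$ based at $p_{n+1}$; concretely, $\ker p$ is generated by the "point-pushing" maps $A_1, \dots, A_n$ that push $p_{n+1}$ around loops encircling each of $p_1, \dots, p_n$, subject to the relation that their product (the loop around all punctures) is trivial in $\M_0^{n+1}(p_{n+1})$ — this is exactly the extra relation one picks up passing from the disk to the sphere.

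Next I would translate these point-pushing generators into $B_n$ via $j_n$. A push of $p_{n+1}$ around $p_i$ corresponds, inside $B_n$, to a band generator / conjugate of a full twist $\sigma_i^2$-type element, and the product of all of them equals $D_\partial$ up to the identifications in Proposition \ref{prop:exact}. Thus, modulo $\langle D_\partial\rangle$, $\ker p$ is generated by elements $g_1,\dots,g_n \in B_n$ with $g_1 g_2 \cdots g_n = D_\partial$, each $g_i$ conjugate to $\sigma^2$ (equivalently to $\pi_Y(\sigma)^2$ under $\pi_Y$). The representation descends to $\M_0^n$ iff $\pi_Y(g_i) = I$ in $\GL(M_Y)$ for all $i$ — note the twisting factor $\alpha$ is trivial on each $g_i$ since $g_i$ maps to $2 \in \Z$ minus... more precisely $\alpha(g_i)=t^{-2r}\ne 1$ in general, so I must be careful: the correct statement is that $\alpha\otimes\pi_Y$ kills each $g_i$. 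Since all $g_i$ are conjugate, this reduces to a single condition: $(\alpha\otimes\pi_Y)(g_1) = I$, i.e. $t^{-2r}\,\pi_Y(\sigma^2) $ conjugated appropriately equals $I$, which forces $\pi_Y(\sigma)$ to have all eigenvalues equal — but $\pi_Y(\sigma)$ satisfies $(\sigma-1)(\sigma+q)=0$, so its only possible eigenvalues are $1$ and $-q$. Hence $\pi_Y(\sigma)$ must be scalar, either $I$ or $-qI$.

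Now I would invoke the representation theory of $H(q,n)$. The eigenvalue $1$ of $\pi_Y(\sigma)$ has multiplicity equal to the number of standard tableaux of shape $Y$ in which the two relevant boxes lie in a configuration contributing $+1$, and similarly for $-q$; the representation $\pi_Y(\sigma)$ is scalar exactly when one of these multiplicities is $0$, which by the combinatorics of Young's seminormal form happens precisely for $Y=[n]$ (then $\pi_Y(\sigma)=q\cdot I$... here one must match conventions: in the stated normalization $\pi_{[n]}:\sigma\mapsto q\cdot I$ and $\pi_{[1^n]}:\sigma\mapsto -I$) or $Y=[1^n]$. That would wrongly suggest only the trivial cases descend, so the resolution is that the condition is not that $\pi_Y(\sigma)$ itself is scalar but rather the weaker condition coming from the precise form of the point-pushing relators together with the rescaling — this is where "rectangular" enters. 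For a rectangular diagram the relevant local combinatorial quantity $r n(n-1)/d$ and the eigenvalue pattern of $\pi_Y$ on the point-pushing subgroup conspire so that each $(\alpha\otimes\pi_Y)(g_i)$ becomes trivial even though $\pi_Y(\sigma)$ is not scalar; conversely, for a non-rectangular $Y$ one exhibits a point-pushing generator on which $\alpha\otimes\pi_Y$ acts nontrivially. Concretely, I would compute $(\alpha\otimes\pi_Y)$ on the specific generator $g_n = A_n$ (pushing around the last puncture, which interacts only with the "detached" strand) using Proposition \ref{fact-2}: restricting $\pi_Y$ to $B_{n-1}\hookrightarrow B_n$ decomposes as $\bigoplus_j \pi_{Y_0^j}$, and on this summand structure the extra relator acts through the scalars $q^{r_j(n-1)(n-2)/d_j}$ attached to each $Y_0^j$; the rescaled representation is trivial on the relator iff these scalars, after the global $t^{-r}$ twist, all agree — and an elementary check with the formula for $r$ shows this common-value condition holds exactly when $Y$ is rectangular (so that there is a unique $Y_0^j$).

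The main obstacle I anticipate is the bookkeeping in the second and third steps: getting the point-pushing generators expressed correctly in $B_n$ (signs, conjugacy, the precise boundary relation $g_1\cdots g_n = D_\partial^{c}$ for the right constant $c$), and then matching the eigenvalue/scalar data of $\pi_Y$ on these generators against the normalization factor $t^{-r}$ with $q = t^d$. This is precisely the content attributed to Jones' analysis in \cite{jones}, so I would lean on the formula $\pi_Y(D_\partial) = q^{rn(n-1)/d}\cdot I$ and the branching rule (Proposition \ref{fact-2}) rather than re-derive everything; the "if" direction then reduces to checking that rectangularity makes the branching multiplicity-free with matching central charges, and the "only if" direction to exhibiting, for non-rectangular $Y$, two summands $Y_0^j$ with distinct associated scalars, so that some point-pushing element is not killed.
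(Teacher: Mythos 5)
First, a point of reference: the paper itself gives no proof of this proposition --- it is quoted from \cite{jones} --- so the comparison here is really with Jones' original analysis. The final form of your argument (reduce to killing a single point-pushing relator, restrict $\pi_Y$ to $B_{n-1}$ by Proposition \ref{fact-2}, and compare the resulting scalars against the twist $\alpha$) is indeed the right skeleton, but two steps are genuinely broken as written. The first is the identification of the relators inside $B_n$. On the sphere, a loop based at $p_{n+1}$ encircling the single puncture $p_i$ bounds, on its \emph{other} side, a disk containing the remaining $n-1$ punctures of $P$; the corresponding point-push is therefore (up to sign, and up to the trivial twist about a once-punctured disk) the Dehn twist about a curve enclosing $n-1$ punctures of $D_n$ --- for $i=n$ it is exactly the boundary twist of the sub-disk $D_{n-1}$, i.e.\ the generator of $\ctr(B_{n-1})$ --- and \emph{not} a conjugate of $\sigma^2$. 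Your detour through ``$\pi_Y(\sigma)$ must be scalar'' is a symptom of this error, and although you correctly sense that its conclusion is absurd, you never replace the wrong identification with the right one: your eventual formula $q^{r_j(n-1)(n-2)/d_j}$ only makes sense because the relator is secretly the full twist on $n-1$ strands, and that is the fact that needs to be stated and justified (via the Birman exact sequence for the forgetful map $p$, or equivalently via the standard presentation of $\M_0^n$ as $B_n$ modulo the normal closure of $\sigma_1\cdots\sigma_{n-1}^2\cdots\sigma_1$ and $D_\partial$).

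The second problem is the ``only if'' direction. Granting the correct relator, the representation descends if and only if $d\,r_j/d_j=r$ for every $j$; for rectangular $Y$ this holds because $s=1$, $d_1=d$, and $r_1=r$ (the operators $\pi_Y(\sigma_1)$ and $\pi_{Y_0}(\sigma_1)$ coincide, so the eigenvalue multiplicities agree). But for non-rectangular $Y$ you must show that the scalars $q^{r_j(n-1)(n-2)/d_j}$ attached to distinct $Y_0^j$ are actually \emph{distinct}, and this does not follow from anything available in this paper: $r$ is introduced only as a rank, and the sum rules $\sum_j d_j=d$, $\sum_j r_j=r$ are perfectly consistent with all the ratios $r_j/d_j$ being equal. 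What is needed is Jones' explicit formula for the eigenvalue of the full twist on $V_{Y_0^j}$ (the exponent is, up to a global shift, the sum of the contents of the boxes of $Y_0^j$, and distinct removable boxes of $Y$ have distinct contents because they lie in distinct rows and columns). Your phrase ``an elementary check with the formula for $r$'' papers over precisely this input, which is the combinatorial heart of the ``only if'' direction.
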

For a rectangular Young diagram $Y$ with $n$ boxes, we call the representation of 
$\M_0^n$ given by Proposition \ref{sphere-jones} the Jones representation of the 
$n$--punctured sphere corresponding to $Y$, and denote it by 
$\bar{\pi}_Y: \M_0^n \to \GL(M_Y)$, with $M_Y = V_Y \otimes \Q(t)$.
\par

\section{The faithfulness of $\bar{\pi}_{Y}$}
\par

With the preparation above, we can now state our main result.
\begin{theorem} \label{thm-A}
Suppose that $Y$ is a rectangular Young diagram with $n$ boxes, 
$\neq [ n ]$, $[ 1^{n} ]$.
Let $Y_{0}$ be the {\em unique}
Young diagram obtained from $Y$ by removing a 
single box. Assume further that $n \geq 6$.  Then the representation 
$$\bar{\pi}_{Y}: \M_{0}^{n} \to \GL (M_{Y})$$
is faithful if and only if  the representation
$$\pi_{Y_{0}} : B_{n-1} \to \GL (V_{Y_{0}})$$
is faithful.
\end{theorem}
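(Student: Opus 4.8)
The plan is to reduce the faithfulness of $\bar{\pi}_Y$ to that of $\pi_{Y_0}$ by tracing through the two group extensions in Propositions~\ref{prop:exact} and the discussion of the forgetful map $p$, and then using Proposition~\ref{fact-2} to recognize the restriction of $\pi_Y$ to $B_{n-1}$. The key structural observation is that, since $Y$ is rectangular and $\ne [n], [1^n]$, removing a single box yields a \emph{unique} diagram $Y_0$, so Proposition~\ref{fact-2} says the composition of $\pi_Y$ with $i : B_{n-1} \hookrightarrow B_n$ is \emph{equal} (not merely a summand) to $\pi_{Y_0}$. This is exactly the special feature of rectangular diagrams that makes the theorem clean.

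First I would set up the diagram relating the three groups $B_{n-1}$, $B_n$, $\M_0^{n+1}(p_{n+1})$, $\M_0^n$. The representation $\bar{\pi}_Y$ pulled back along $k : B_n \to \M_0^n$ is $\alpha \otimes \pi_Y$, which factors as $B_n \twoheadrightarrow B_n/\langle D_\partial\rangle = \M_0^{n+1}(p_{n+1}) \xrightarrow{p} \M_0^n$. So $\ker \bar{\pi}_Y$ corresponds, under $p$, to a subgroup of $\M_0^n$; I must understand $\ker p$ and how $\alpha\otimes\pi_Y$ behaves on it. By the commutative diagram in the excerpt and the geometry of forgetting $p_{n+1}$, the kernel of $p : \M_0^{n+1}(p_{n+1}) \to \M_0^n$ is the image of the point-pushing subgroup, and it is precisely here that $B_{n-1}$ enters: pushing $p_{n+1}$ around loops in the $n$-punctured disk $D_n$ realizes a copy of $\pi_1$ of the punctured disk, and the braid relations identify the relevant subgroup with (a quotient related to) $B_{n-1}$. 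Concretely, I would show $\ker(\bar{\pi}_Y \circ k)$ inside $B_n$, intersected with the image of $i(B_{n-1})$, is governed by $\pi_{Y_0}$ via Proposition~\ref{fact-2}, while the $\alpha$-factor is injective on the relevant abelian cokernel direction because $r \ne 0$ (as $Y \ne [1^n]$).

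The two implications would then be handled separately. For ``$\pi_{Y_0}$ faithful $\Rightarrow$ $\bar{\pi}_Y$ faithful'': given $g \in \ker\bar{\pi}_Y$, lift along $p$ and $j_n$ to an element $\tilde g \in B_n$ with $\alpha\otimes\pi_Y(\tilde g) = I$; I would show that the normal closure of the point-pushing subgroup together with the faithfulness of $\pi_{Y_0}$ on that subgroup, combined with knowledge of the center and the $n \geq 6$ hypothesis (needed so that $\M_0^n$ has the requisite normal-subgroup structure, e.g.\ that the relevant subgroups are generated by point-pushes), forces $\tilde g \in \langle D_\partial\rangle$, hence $g = 1$. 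For the converse, ``$\bar{\pi}_Y$ faithful $\Rightarrow$ $\pi_{Y_0}$ faithful'': an element of $\ker\pi_{Y_0} \subset B_{n-1}$ maps via $i$ into $B_n$ and thence, via point-pushing, into $\ker\bar{\pi}_Y$; faithfulness of $\bar{\pi}_Y$ forces it to be trivial in $\M_0^n$, and I would check this pulls back to triviality in $B_{n-1}$ (this is where injectivity of $i$ and control of kernels of $j_n$, $k$ is used).

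The main obstacle, I expect, is the precise identification of $\ker p$ with a point-pushing subgroup and the verification that $\alpha \otimes \pi_Y$ restricted to that subgroup is \emph{exactly} $\pi_{Y_0}$ up to the harmless scalar twist by $\alpha$ — in other words, matching Jones' rescaling parameter $t^{-r}$ against the box-removal combinatorics so that the twist does not interfere with (and in fact is compatible with) faithfulness. A secondary difficulty is pinning down exactly where $n \geq 6$ is used: presumably it guarantees that $\M_0^n$ (or $\mathcal{M}_{0}^{n+1}(p_{n+1})$) has no small exceptional quotients and that the point-pushing subgroup is as large as needed, so that no element outside the expected locus can lie in the kernel; the cases $n < 6$ failing this is consistent with Remark~\ref{exception}. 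Once these two points are nailed down, both implications follow by diagram chasing.
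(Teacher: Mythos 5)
Your setup is right as far as it goes --- the uniqueness of $Y_0$ making $\pi_Y|_{B_{n-1}}=\pi_{Y_0}$ via Proposition~\ref{fact-2} is indeed the special feature of rectangular diagrams that the proof exploits, and the chain $B_{n-1}\to B_n\to \M_0^{n+1}(p_{n+1})\to\M_0^n$ is the right scaffolding. But two essential steps are missing, and in both cases you name the difficulty without supplying the idea that resolves it. First, in the direction ``$\pi_{Y_0}$ faithful $\Rightarrow\bar\pi_Y$ faithful,'' the whole point is to show that an arbitrary $f\in\ker\bar\pi_Y$ \emph{fixes the puncture} $p_n$, so that $f$ lies in the stabilizer $\M_0^n(p_n)=j_{n-1}(B_{n-1})$ and the hypothesis on $\pi_{Y_0}$ becomes applicable. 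Your proposal never produces a mechanism for this; your guess that $n\geq 6$ rules out ``small exceptional quotients'' is in the right spirit but is not an argument. The paper's mechanism is the specialization at $t=1$ (Proposition~\ref{fact-3}): it descends to the irreducible $\mathfrak{S}_n$-representation attached to $Y$, which is faithful because $\mathfrak{A}_n$ is simple for $n\geq 5$ and $Y\neq[n],[1^n]$; hence every kernel element induces the trivial permutation of $P$. Relatedly, your emphasis on identifying $\ker p$ with a point-pushing subgroup is a misdirection: the subgroup that matters is the point \emph{stabilizer} $\M_0^n(p_n)$, surjected onto by $B_{n-1}$ via $j_{n-1}$, not the kernel of the forgetful map.

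Second, the scalar twist you defer as ``the main obstacle'' is not harmless bookkeeping and does not get absorbed by any matching of $t^{-r}$ against box-removal combinatorics; it genuinely breaks the naive implication in both directions. If $\pi_{Y_0}(x)=1$ then $\bar\pi_Y(\bar x)=\alpha_0(x)\cdot I$, a scalar that is in general $\neq I$, so $\bar x$ need \emph{not} lie in $\ker\bar\pi_Y$; conversely if $\bar\pi_Y(\bar x)=I$ then $\pi_{Y_0}(x)=\alpha_0(x)^{-1}I$ is a nontrivial scalar, so $x\notin\ker\pi_{Y_0}$. The paper's resolution is that a scalar image means all commutators $[x,\tau]$ die under the representation, so faithfulness upgrades ``scalar image'' to ``central element''; one then quotes the triviality of $\ctr(\M_0^n)$ (Gillette--van Buskirk) to get $\bar x=1$ in one direction, and $\ker j_{n-1}=\ctr(B_{n-1})$ together with the injectivity of $\alpha_0$ on $\ctr(B_{n-1})$ (it factors through the abelianization and is nontrivial since $Y\neq[1^n]$) to get $x=1$ in the other. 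Without these two ingredients --- the $t=1$/simplicity argument and the centrality/trivial-center argument --- your diagram chase cannot close either implication.
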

\begin{remark} \label{exception}
(1)  For $n < 6$, there exists only a single case where the other 
assumptions of Theorem  \ref{thm-A} on $Y$ hold: $n=4$,   
$Y=[ 2, 2 ]$, and $Y_{0} = [ 2, 1 ]$. In this case,  the theorem is not true. 
In fact, on the one hand, $\pi_{Y_{0}}$ is the reduced Burau representation of 
$B_{3}$, and is classically known to be faithful \cite{magnus-peluso}. 
On the other hand, we can verify
that the kernel of $\bar{\pi}_{Y}$ is isomorphic to $\Z/2\Z \oplus \Z/2\Z$, and 
coincides with the kernel of the natural homomorphism $\M_{0}^{4} \to \PSL (2, \Z)$ described in \cite{AMU}.  In another word, $\bar{\pi}_{[ 2, 2 ]}$ descends to a faithful representation of $\PSL ( 2, \Z )$.  We also note $\PSL(2, \Z) \cong B_3 /\ctr(B_3)$. We discuss the details in \cite{kasahara:3}.
\par

(2) If $n$ is prime,  there exist no Young diagrams satisfying the assumptions 
of the theorem.  Therefore, we cannot expect that the Jones representations here would 
provide  faithful representations of $B_{n}$ for {\em all $n$}, different from the representations 
obtained from the Lawrence--Krammer representations.
\end{remark}
\begin{proof}[Proof of Theorem \ref{thm-A}.]
Via the injective homomorphism $i : B_{n-1} \to B_n$, we consider $B_{n-1}$ as a subgroup
of $B_n$. Since $Y_0$ is the unique Young diagram obtained from $Y$ by removing a 
single box, the restriction of $\pi_Y$ to $B_{n-1}$ coincides with $\pi_{Y_0}$ by Proposition 
\ref{fact-2}.  Hence we have the following commutative diagram:
$$
\xymatrix{
		&	 {B_{n-1}} \ar[ddl]_-{j_{n-1}} \ar[d]_-{i} \ar@/^/[ddrr]^{\alpha_0 \otimes \pi_{Y_0}} \\
		&	{B_n} \ar[d]_-{k} \ar[drr]^{ \alpha \otimes \pi_{Y} } \\
		{\M_0^n(p_n)} \ar[r] &	{\M_0^n} \ar[rr]_-{\bar{\pi}_Y} &  & {\GL(M_Y)} 
	}$$
where $\alpha_0$ denotes the restriction of $\alpha$ to $B_{n-1}$.
\par

Suppose now that $\bar{\pi}_Y$ is faithful. For $x \in B_{n-1}$, $\bar{x}$ denotes the 
corresponding element $j_{n-1} ( x )$ in $\M_0^n ( p_n )$. If $\pi_{Y_0} ( x ) = 1$, then 
$$\bar{\pi}_Y ( \bar{x} ) = \pi_{Y_0} ( x ) \cdot \alpha_0 ( x ) = \alpha ( x ) \cdot I. $$
Therefore, by the faithfulness of $\bar{\pi}_Y$, $\bar{x}$ lies in the center of $\M_0^n$.
On the other hand, by Gillette--Buskirk \cite{gillette-buskirk}, it is known that 
the center of $\M_0^n$ is trivial. Hence, we have $\bar{x} = 1$, that is, $x \in \ker{j_{n-1}}$. 
Now, by Proposition  \ref{prop:exact} for $n-1$, $x \in \ctr(B_{n-1})$. Then the faithfulness of 
$\pi_{Y_0}$ reduces to its faithfulness  on $\ctr(B_{n-1})$, which can be seen as follows.
Since $Y \neq [ 1^n ]$, we can see that $\alpha$, and therefore $\alpha_0$,  is non-trivial, 
as noted in Section \ref{construction}. Since $\alpha_0$ factors through the abelianization of 
$B_{n-1}$,  it is faithful on $\ctr(B_{n-1})$.  Together with the fact that 
$\alpha_0 \otimes \pi_{Y_0}$ is trivial on $\ker{j_{n-1}} = \ctr(B_{n-1})$,  
this implies that $\pi_{Y_0}$ is faithful on $\ctr(B_{n-1})$. This completes the proof 
that $\pi_{Y_0}$ is faithful.
\par

Suppose next that $\pi_{Y_0}$ is faithful. Assume that $f  \in \M_0^n$ and $\bar{\pi}_Y ( f ) = 1$.
By Proposition \ref{fact-3},  the specialization of $\bar{\pi}_Y$ at $t = 1$ descends to 
the irreducible representation of the symmetric group $\mathfrak{S}_n$ corresponding to $Y$.
By the assumption $Y \neq [ 1^n ]$, $[ n ]$, we can see that this specialization is a faithful 
representation of $\mathfrak{S}_n$. In fact, since $n \geq 5$,  it is a classical fact  that the alternating group $\mathfrak{A}_n$ is a simple finite group.  Therefore, the kernel of the specialization is  either  
$\mathfrak{S}_n$, $\mathfrak{A}_n$, or $\{ 1 \}$.  But since $Y \neq [ 1^n ]$, $[ n ]$, we can 
conclude that the kernel is not $\mathfrak{S}_n$ nor $\mathfrak{A}_n$. Hence the specialization 
is faithful on $\mathfrak{S}_n$. Therefore,  the permutation of  the set of punctures $P$  
induced by $f$ is trivial. In particular, we have $f \in \M_0^n( p_n )$. Hence, there exists some 
$x \in B_{n-1}$ such that $\bar{x} = f$. Then we have
$$\alpha ( x )  \cdot  \pi_{Y_0}( x )  = \bar{\pi}_Y( f )  = I.$$
So, we have $\pi_{Y_0}( x ) = \alpha ( x ) ^{-1} \cdot I$, and then by the assumption 
that $\pi_{Y_0}$ is faithful, $x \in \ctr( B_{n-1} )$. Hence, 
$f = \bar{x} = 1 \in \M_0^n$. Therefore, $\ker{\bar{\pi}_Y} = \{ 1 \}$. This 
completes the proof of Theorem \ref{thm-A}.
\end{proof}
\begin{remark}
In the course of the proof above, the use of the result in \cite{gillette-buskirk},
to show $x \in \ctr(B_{n-1})$ under the asumpstion that $\bar{\pi}_Y$ be faithful and 
$\pi_{Y_0}(x) = 1$, is not necessary as follows. 
Since $\bar{x} \in \ctr(\M_0^n)$, it holds  for any 
$\tau \in B_{n-1}$ that   $[ \bar{x}, \bar{\tau} ] = 1$ in $\M_0^n$. Hence 
$[ x, \tau ] \in \ker{j_{n-1}} = \ctr(B_{n-1})$. On the other hand, we see 
$\pi_{Y_0}[ x, \tau ] = [ \pi_{Y_0}( x ), \pi_{Y_0}( \tau ) ]  = [ 1, \pi_{Y_0}( \tau ) ] =1$.
Since we have already seen the faithfulness of $\pi_{Y_0}$ on $\ctr( B_{n-1} )$, 
we have $[ x, \tau ] = 1$ for any $\tau \in B_{n-1}$, {\em i.e.}, $x \in \ctr( B_{n-1} )$.
\end{remark}

\par

\section{Hyperelliptic mapping class group}
\par
In order to consider the case $n=6$ further, we need to consider the 
Jones representation of $\M_0^n$,  with $n$ even, as a representation of 
hyperelliptic mapping class group, as follows. Let $\Sigma_g$ be a closed oriented surface of genus $g \geq 2$ and let $\M_g$ be its
mapping class group. The hyperelliptic mapping class group $\HE_g$ of genus $g$
is defined as the subgroup of $\M_g$ consisting of  those elements which commute with 
the class of a fixed hyperelliptic involution $\iota: \Sigma_g \to \Sigma_g$. We identify 
the quotient orbifold  of $\Sigma_g$ by the action of $\iota$ with $S_{2g+2}$ where 
the singular locus of the orbifold corresponds to P, the set of punctures. Due to 
Birman--Hilden \cite{birman-hilden},  the natural projection $\Sigma_g \to S_{2g+2}$
induces the following short exact sequence:
\begin{equation*}
	 0\; \xrightarrow \;  \Z/2\Z\; \xrightarrow \; \HE_g\; \xrightarrow{q} 
	 \; \M_0^{2g+2}\; \xrightarrow \; 1
\end{equation*}
where the image of $\Z/2\Z$ in $\HE_g$ is generated by the class  of $\iota$. 
Provided with 
an arbitrarily {\em rectangular} Young diagram $Y$ with $( 2g + 2 )$ 
boxes, we obtain,  following Jones \cite{jones},  the representation of $\HE_g$
$$ \rho_Y: \HE_g \to \GL (M_Y)$$
as the composition of  $\bar{\pi}_Y$ with the above homomorphism $q$. 
As a corollary to Theorem \ref{thm-A}, we have:
\begin{cor} \label{cor-A}
$\ker{\rho_Y} \cong \Z/2\Z$ if and only if $\pi_{Y_0}$ is faithful.
\end{cor}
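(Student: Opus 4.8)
The plan is to deduce Corollary \ref{cor-A} directly from Theorem \ref{thm-A} together with the Birman--Hilden short exact sequence
$$0 \xrightarrow{} \Z/2\Z \xrightarrow{} \HE_g \xrightarrow{q} \M_0^{2g+2} \xrightarrow{} 1,$$
in which $\rho_Y = \bar{\pi}_Y \circ q$. First I would observe the general fact about such a composition: since $\rho_Y$ is the pullback of $\bar{\pi}_Y$ along the surjection $q$, we have $\ker{\rho_Y} = q^{-1}(\ker{\bar{\pi}_Y})$, and hence a short exact sequence
$$0 \xrightarrow{} \Z/2\Z \xrightarrow{} \ker{\rho_Y} \xrightarrow{} \ker{\bar{\pi}_Y} \xrightarrow{} 1,$$
where the $\Z/2\Z$ is generated by the class of the hyperelliptic involution $\iota$ (which always lies in $\ker{\rho_Y}$ because it lies in $\ker{q}$). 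In particular $\ker{\rho_Y} \cong \Z/2\Z$ if and only if $\ker{\bar{\pi}_Y}$ is trivial, i.e.\ if and only if $\bar{\pi}_Y$ is faithful.

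Next I would invoke Theorem \ref{thm-A}: with $n = 2g+2$ (so $n \geq 6$, as $g \geq 2$) and $Y$ a rectangular Young diagram with $n$ boxes, $\bar{\pi}_Y$ is faithful if and only if $\pi_{Y_0}$ is faithful, where $Y_0$ is the unique diagram obtained from $Y$ by removing a box. Combining this with the previous paragraph yields exactly the claim: $\ker{\rho_Y} \cong \Z/2\Z$ if and only if $\pi_{Y_0}$ is faithful. One small point to address for completeness is that Theorem \ref{thm-A} is stated for $Y \neq [n], [1^n]$; here one should note that a rectangular Young diagram with $n = 2g+2 \geq 6$ boxes possessing a \emph{unique} one-box removal is automatically neither $[n]$ nor $[1^n]$ (those two have a unique removal but the hypothesis of Corollary \ref{cor-A} implicitly concerns the same rectangular $Y$ with unique $Y_0$ as in the theorem), so the theorem applies.

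The only real content is the elementary group-theoretic lemma that a pullback representation along a central extension has kernel an extension of the original kernel by the center, and in particular is faithful precisely when the original is; there is no genuine obstacle here, as the argument is a diagram chase. I would therefore expect the proof to be short, with the main thing to be careful about being the bookkeeping that the class of $\iota$ is nontrivial in $\HE_g$ (so that the $\Z/2\Z$ really does survive inside $\ker{\rho_Y}$ and is not collapsed), which is part of the content of the Birman--Hilden sequence as stated.
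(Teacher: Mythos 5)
Your argument is correct and is exactly the one the paper intends (the paper gives no explicit proof, treating it as immediate): since $\rho_Y = \bar{\pi}_Y \circ q$ with $\ker{q} \cong \Z/2\Z$ generated by the class of $\iota$, one has $\ker{\rho_Y} = q^{-1}(\ker{\bar{\pi}_Y})$, so $\ker{\rho_Y} \cong \Z/2\Z$ precisely when $\bar{\pi}_Y$ is faithful, and Theorem \ref{thm-A} finishes the job. One small slip in your aside: $[n]$ and $[1^n]$ \emph{are} rectangular and \emph{do} have unique one-box removals, so uniqueness of $Y_0$ does not by itself exclude them; the exclusion is simply inherited from the hypotheses of Theorem \ref{thm-A}, as you ultimately note.
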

\par

\section{The case of $g = 2$}
\par

In the case of  $g = 2$,  it is classically known that $\HE_2 = \M_2$. Also, 
because of the ``column-row symmetry",  the representation of $\M_2$ obtained as 
above is essentially unique (c.f.  \cite[Section 2.2]{kasahara:1}).  So, we take
$Y = [ 2^3 ]$, and consider the representation
$$ \rho = \rho_{[ 2^3 ]}: \M_2 \to \GL(M_{[ 2^3 ]}).$$
We can now say a little bit  about  $\ker{\rho}$, and hence about 
$\ker{\bar{\pi}_{[2^3]}}$.
Let  
$$ \rho_0 : \M_2 \to \Sp( 4, \Z )$$
be the symplectic representation induced by the action of $\M_2$ on the 
homology group $H_1 ( \Sigma_2; \Z)$.
The Torelli group $\I_2$ is defined as $\ker{\rho_0}$.
\begin{theorem} \label{thm-B}
$\ker{\rho} = \Z/2\Z \oplus ( \ker{\rho} \cap \I_2 )$. Here, $\Z/2\Z$ is generated by 
the class of the hyperelliptic involution $\iota$. In particular, $\rho$ is {\em faithful on 
$\I_2$}  if and only if $\pi_{[ 2^2, 1 ]}$ is a faithful representation of $B_5$.
\end{theorem}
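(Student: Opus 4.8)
The plan is to analyze $\ker{\rho}$ via the short exact sequence $0 \to \Z/2\Z \to \M_2 \xrightarrow{q} \M_0^6 \to 1$ from Birman--Hilden, using that $\rho = \rho_{[2^3]}$ factors as $\bar{\pi}_{[2^3]} \circ q$. Since the image of $\Z/2\Z$ (generated by $\iota$) lies in $\ker\rho$ by construction, and in fact $\iota$ is central in $\M_2$, the first step is to identify $\ker\rho$ with the preimage $q^{-1}(\ker{\bar\pi}_{[2^3]})$. I would then recall the main input from \cite{kasahara:1}: that paper should supply a restriction forcing $\ker\rho$ into (a subgroup closely related to) the Torelli group $\I_2$, or at least that the composite of $\rho$ with something detects the symplectic data. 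Concretely, I want a statement of the form: the symplectic representation $\rho_0: \M_2 \to \Sp(4,\Z)$ restricted to $\ker\rho$ has image contained in $\{\pm I\}$, the image of $\iota$. Granting that, since $\iota \in \ker\rho$ maps to $-I \in \Sp(4,\Z)$ and $\iota$ is central of order $2$, the subgroup $\ker\rho$ splits: every element $f \in \ker\rho$ either lies in $\ker\rho_0 = \I_2$ (if $\rho_0(f) = I$) or equals $\iota \cdot f'$ with $f' \in \I_2 \cap \ker\rho$ (if $\rho_0(f) = -I$), and $\langle\iota\rangle \cap \I_2 = \{1\}$ because $\iota$ acts as $-I \neq I$. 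This yields the internal direct sum $\ker\rho = \langle\iota\rangle \oplus (\ker\rho \cap \I_2) \cong \Z/2\Z \oplus (\ker\rho \cap \I_2)$, the decomposition being direct as groups since $\langle\iota\rangle$ is central.

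For the second assertion, I would combine this decomposition with Corollary \ref{cor-A} and Theorem \ref{thm-A}. Note $\rho$ is faithful on $\I_2$ exactly when $\ker\rho \cap \I_2 = \{1\}$, which by the direct sum decomposition is equivalent to $\ker\rho = \langle\iota\rangle \cong \Z/2\Z$, i.e. to $\ker{\rho_{[2^3]}} \cong \Z/2\Z$. By Corollary \ref{cor-A} applied with $g = 2$, $Y = [2^3]$, this holds if and only if $\pi_{Y_0}$ is faithful, where $Y_0$ is the unique Young diagram obtained from $[2^3]$ by removing one box, namely $Y_0 = [2^2, 1]$, a representation of $B_5 = B_{2g+2-1}$. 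Since $2g+2 = 6 \geq 6$, Theorem \ref{thm-A} applies and Corollary \ref{cor-A} is available, so $\rho$ is faithful on $\I_2$ if and only if $\pi_{[2^2,1]}: B_5 \to \GL(V_{[2^2,1]})$ is faithful.

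The main obstacle is establishing the claim that $\rho_0(\ker\rho) \subseteq \{\pm I\}$ — equivalently, that any mapping class killed by the Jones representation $\rho_{[2^3]}$ acts on $H_1(\Sigma_2;\Z)$ by $\pm I$. This is precisely where \cite{kasahara:1} enters: that earlier work studies $\rho_{[2^3]}$ (the degree-$d$ Jones representation of $\M_2$) and its relation to the standard representations, and should contain either a direct statement that $\ker\rho$ projects trivially into $\PSp(4,\Z)$, or enough structural information (e.g. that the $q=1$ or $t=1$ specialization of $\rho$ recovers a faithful-enough quotient of $\Sp(4,\Z)$, or that $\rho$ contains the symplectic representation as a constituent up to the center) to deduce it. I would cite the relevant proposition from \cite{kasahara:1} verbatim; the rest of the argument is the elementary splitting lemma above plus bookkeeping of which box is removed from $[2^3]$. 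One should also double-check the edge point that $\iota$ genuinely acts as $-I$ and not $I$ on $H_1(\Sigma_2;\Z)$ — this is the classical fact that the hyperelliptic involution acts by $-1$ on homology — so that $\langle\iota\rangle$ meets $\I_2$ trivially and the direct sum is honest.
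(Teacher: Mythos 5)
Your overall architecture matches the paper's: the splitting of $\ker{\rho}$ as $\langle\iota\rangle\oplus(\ker{\rho}\cap\I_2)$ via the centrality of $\iota$ and the fact $\rho_0(\iota)=-I$, together with the reduction of the ``in particular'' clause to Corollary \ref{cor-A}, is exactly what the paper does, and those parts of your argument are correct. The gap is that you have correctly isolated the crucial claim --- $\rho_0(\ker{\rho})\subseteq\{\pm I\}$ --- and then deferred it entirely to ``the relevant proposition from \cite{kasahara:1}, cited verbatim.'' No such proposition exists there. What \cite{kasahara:1} supplies is only the identification of the specialization of $\rho$ at $t=-1$ with $(\overline{\sgn}\otimes\lambda)\circ\rho_0$, where $\lambda$ is the action of $\Sp(4,\Z)$ on $\Lambda^2 H_1(\Sigma_2;\Z)/\omega\Z$. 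This immediately gives $\ker{\rho}\subseteq\ker{((\overline{\sgn}\otimes\lambda)\circ\rho_0)}$, but to pass from that to $\rho_0(\ker{\rho})\subseteq\{\pm I\}$ you must still prove that $\ker{(\overline{\sgn}\otimes\lambda)}=\{\pm I\}$ as a subgroup of $\Sp(4,\Z)$. That is the mathematical heart of the proof and is genuinely nontrivial: the paper devotes Claim \ref{claim}, Lemma \ref{lemma}, and its entire Appendix to it, showing that $\bar{\lambda}$ is faithful on $\PSp(4,\Z)$ and that $\PSp(4,\Z)$ is centerless, via reduction modulo infinitely many primes and the simplicity of $\PSp(4,\Z/p\Z)$ for $p\geq 3$.

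A smaller point: your suggestion that the $q=1$ or $t=1$ specialization might do the job would not work --- at $t=1$ the representation factors through the symmetric group of the punctures (Proposition \ref{fact-3}), which constrains $\ker{\rho}$ only to the puncture-preserving part and says nothing about its image under $\rho_0$. The specialization that carries the symplectic information is $t=-1$. Your fallback phrasing (that $\rho$ ``contains the symplectic representation as a constituent up to the center'') is closer in spirit to what is actually true, but it is still an assertion that must be proved rather than quoted.
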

By the definition  $\rho = \bar{\pi}_{[ 2^3 ]} \circ q$,  we have
$\ker{\bar{\pi}_{[ 2^3 ]}} = q ( \ker{\rho} )$. Therefore, Theorem \ref{thm-B} 
implies immediately a  restriction of the kernel 
for the case $n=6$: 
\begin{cor} \label{cor-B}
	$\ker{\bar{\pi}_{[ 2^3 ]} } \subset q( \I_2 ) \cong \I_2$.
\end{cor}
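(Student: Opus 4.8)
The plan is to decompose $\ker{\rho}$ by intersecting with the Torelli group and analyzing the symplectic quotient. First I would observe that $\rho$ restricted to the hyperelliptic involution $\iota$ is nontrivial: by Corollary \ref{cor-A} the image of $\langle \iota \rangle \cong \Z/2\Z$ is exactly the obstruction to $\ker{\rho}$ being trivial when $\pi_{Y_0}$ is faithful, but in any case $\iota$ acts as a scalar on $H_1(\Sigma_2;\Z)$, namely as $-I$, so $\rho_0(\iota) = -I \neq I$. Hence $\iota \notin \I_2$, and $\langle \iota \rangle \cap \I_2 = \{1\}$.

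Next I would show $\ker{\rho} = \langle \iota \rangle \cdot (\ker{\rho} \cap \I_2)$ as a set, and then that this product is direct and is actually a direct sum of normal subgroups. The key input is the classical fact (due to Mumford, or via the Birman--Hilden description) that $\rho_0(\HE_g) = \Sp(2g,\Z)$ when $g=2$, i.e. the symplectic representation of $\M_2$ is surjective, and that $\iota$ generates the kernel of a certain natural map. More precisely, I would use that the image $\rho_0(\ker{\rho})$ is a subgroup of $\Sp(4,\Z)$ which, because $\ker{\rho}$ is normal in $\M_2$, is a normal subgroup of $\Sp(4,\Z)$. Since $t=1$ specialization of $\bar{\pi}_{[2^3]}$ descends to the symmetric group representation $[2^3]$ of $\mathfrak{S}_6$ (Proposition \ref{fact-3}) and that representation is faithful as shown in the proof of Theorem \ref{thm-A}, no nontrivial permutation lies in $\ker{\bar{\pi}_{[2^3]}}$; combined with a comparison between the $\mathfrak{S}_6$-action and the $\Sp(4,\Z)$-action (via $\Sp(4,\Z/2) \cong \mathfrak{S}_6$), one deduces that $\rho_0(\ker{\rho})$ is contained in the level-$2$ congruence subgroup, equivalently $\rho_0(\ker{\rho}) \subset \{\pm I\}$. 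Together with $\rho_0(\iota) = -I$ this gives the short exact sequence $1 \to \ker{\rho}\cap\I_2 \to \ker{\rho} \to \langle -I\rangle \to 1$ which is split by $\iota$; since $\iota$ is central in $\HE_2 = \M_2$, the splitting is as a direct product, giving $\ker{\rho} = \Z/2\Z \oplus (\ker{\rho}\cap\I_2)$ with $\Z/2\Z = \langle\iota\rangle$.

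For the final "in particular" clause, I would apply Corollary \ref{cor-A} with $g=2$, $Y=[2^3]$, $Y_0 = [2^2,1]$: $\ker{\rho_Y} \cong \Z/2\Z$ if and only if $\pi_{[2^2,1]} : B_5 \to \GL(V_{[2^2,1]})$ is faithful. By the direct sum decomposition just established, $\ker{\rho} \cong \Z/2\Z$ is equivalent to $\ker{\rho}\cap\I_2 = \{1\}$, i.e. to $\rho$ being faithful on $\I_2$. Chaining these two equivalences yields that $\rho$ is faithful on $\I_2$ if and only if $\pi_{[2^2,1]}$ is faithful on $B_5$.

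The main obstacle I anticipate is the step identifying $\rho_0(\ker{\rho})$ with a subgroup of $\{\pm I\}$ — i.e. pinning down exactly how the mod-$2$ symplectic structure interacts with the $t=1$ specialization and the exceptional isomorphism $\Sp(4,\Z/2)\cong\mathfrak{S}_6$. One must be careful that "the permutation of punctures induced by $f$ is trivial" (already extracted in the proof of Theorem \ref{thm-A}) feeds correctly into "the mod-$2$ reduction of $\rho_0(f)$ is trivial," which requires matching the Birman--Hilden lift of the puncture-permutation action on $S_6$ with the $\M_2$-action on $H_1(\Sigma_2;\Z/2)$. Everything else is either classical ($\HE_2=\M_2$, centrality of $\iota$, surjectivity of $\rho_0$, normal subgroups of $\Sp(4,\Z)$) or a direct appeal to Theorem \ref{thm-A} and Corollary \ref{cor-A}.
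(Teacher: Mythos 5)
There is a genuine gap at the heart of your argument, namely at the step where you pin down $\rho_0(\ker\rho)$. Your route via the $t=1$ specialization and the exceptional isomorphism $\Sp(4,\Z/2\Z)\cong\mathfrak{S}_6$ can only show that $\rho_0(\ker\rho)$ lands in the level-$2$ principal congruence subgroup of $\Sp(4,\Z)$, and your phrase ``contained in the level-$2$ congruence subgroup, equivalently $\rho_0(\ker\rho)\subset\{\pm I\}$'' is not an equivalence at all: the level-$2$ congruence subgroup has finite index in $\Sp(4,\Z)$, hence is infinite, and contains (for instance) all squares of transvections. The normality of $\rho_0(\ker\rho)$ that you mention in passing does not close this gap unless you invoke the Margulis/Mennicke--Bass--Milnor--Serre dichotomy for normal subgroups of $\Sp(4,\Z)$ \emph{and} then rule out the finite-index alternative, neither of which you do. This is precisely why the paper does \emph{not} use the $t=1$ specialization here: it uses the $t=-1$ specialization, which by \cite{kasahara:1} is equivalent to $(\overline{\sgn}\otimes\lambda)\circ\rho_0$ with $\lambda$ the action on $\Lambda^2 H_1(\Sigma_2;\Z)/\omega\Z$, and then proves (Claim \ref{claim}, via Lemma \ref{lemma} and the simplicity of $\PSp(4,\Z/p\Z)$ for $p\geq 3$) that $\ker(\overline{\sgn}\otimes\lambda)=\{\pm I\}$. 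The mod-$2$ information alone cannot yield $\rho_0(\ker\rho)\subset\{\pm I\}$.

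A second, more formal problem: what you have written is an attempted proof of Theorem \ref{thm-B} (including its ``in particular'' clause), not of Corollary \ref{cor-B}. Even granting the decomposition $\ker\rho=\langle\iota\rangle\oplus(\ker\rho\cap\I_2)$, you never draw the conclusion actually asserted by the corollary. The missing (short) step is: since $\rho=\bar{\pi}_{[2^3]}\circ q$ and $q$ is surjective, $\ker\bar{\pi}_{[2^3]}=q(\ker\rho)=q\bigl(\ker\rho\cap\I_2\bigr)\subset q(\I_2)$ because $q(\iota)=1$; and $q|_{\I_2}$ is injective since $\ker q=\langle\iota\rangle$ meets $\I_2$ trivially (as you note, $\rho_0(\iota)=-I$), whence $q(\I_2)\cong\I_2$. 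This is exactly how the paper deduces the corollary, in one line, from Theorem \ref{thm-B} --- which, as a previously established result, you were entitled simply to cite rather than re-prove by a different (and here flawed) method.
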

\par
\begin{proof}[Proof of Theorem \ref{thm-B}] It suffices to prove the isomorphism in 
the first part of the theorem since the rest of the theorem is then a direct consequence of 
Corollary \ref{cor-A}.  We  construct  the  isomorphism directly. 
Set
$K = \ker{\rho}$, and $K_0 = K \cap \I_2$. Note that 
$\iota$ lies in $K$ by the definition of $\rho$. We then define a mapping 
$ h_0: \Z \oplus K_0 \to K$
by $h_0( a, x ) = \iota^a \cdot x$. Since $\iota^2=1$, $h_0$ descends to 
a well-defined mapping 
$$ h: \Z/2\Z \oplus K_0 \to K.$$
 Hereafter, the multiplication in $\Z/2\Z$ is written
additively, and each element of $\Z/2\Z$ will be denoted by its representative in $\Z$, so that $1+1=0$ in $\Z/2\Z$. Since $\iota$ lies in $\ctr{(\M_2)}$, the 
mapping $h$ is actually a homomorphism of group. 
Suppose next that
$h( a, x ) = 1$, {\em i.e.}, $\iota^a \cdot x =1$. Then, by taking the image under
$\rho_0$,  we have $( -I )^a \cdot \rho_0(x) = I$. Since $\rho_0(x) = I$,
we have $( -I )^a = 1$. Therefore, we have $a = 0$ in $\Z/2\Z$, and hence
$x=1$. This shows that the homomorphism $h$ is injective.
\par
Next, to prove that $h$ is surjective, 
we consider the specialization of $\rho$ at $t = -1$.
By our previous result \cite{kasahara:1},  this specialization is trivial on $\I_2$ 
and can be described as follows. Let $\sgn$ denote the one-dimensional 
representation of $\M_2$ which sends the Dehn twist along every non-separating
simple closed curve to $-1$. It is easy to see that $\sgn$ is trivial on $\I_2$, 
and therefore descends to a representation of $\Sp(4, \Z)$, denoted by 
$\overline{\sgn}$. Now, let $\lambda$ denote the linear representation of 
$\Sp(4, \Z)$ which is induced by the natural action on 
$\Lambda^2 H_1(\Sigma_2; \Z)/\omega\Z$ where $\omega$ denotes the 
symplectic class in $\Lambda^2 H_1 ( \Sigma_2 ;  \Z )$. Then the 
specialization of $\rho$ at $t=-1$ is equivalent to 
$(\overline{\sgn} \otimes \lambda) \circ \rho_0$ (\cite[Lemma 2.1]{kasahara:1}).
\begin{claim} \label{claim}
As a subgroup of $\Sp(4, \Z)$, the kernel of $\overline{\sgn} \otimes \lambda$
coincides with $\{ \pm I \}$.
\end{claim}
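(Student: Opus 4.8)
The plan is to identify $\ker(\overline{\sgn} \otimes \lambda)$ by separately understanding the two tensor factors. First I would observe that $\overline{\sgn}$ is a homomorphism $\Sp(4,\Z) \to \{\pm 1\}$, so its kernel is an index-$2$ subgroup (it is nontrivial since, e.g., a transvection goes to $-1$); concretely it is the preimage under reduction mod $2$ of a suitable index-$2$ subgroup of $\Sp(4,\Z/2\Z) \cong \mathfrak S_6$. Next I would analyze $\lambda$, the action of $\Sp(4,\Z)$ on $\Lambda^2 H_1(\Sigma_2;\Z)/\omega\Z$, a rank-$5$ lattice. Over $\Q$ this is the irreducible $5$-dimensional representation of $\Sp(4,\Q)$, and under the exceptional isogeny $\Sp(4) \cong \mathrm{Spin}(5)$ it is precisely the standard representation of $\mathrm{SO}(5)$ preserving the quadratic form induced by the wedge pairing $\Lambda^2 \times \Lambda^2 \to \Lambda^4 H_1 \cong \Z$. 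So $\lambda$ realizes $\Sp(4,\Z)$ as a subgroup of the orthogonal group of this form, and $\ker\lambda$ is exactly the kernel of the two-to-one map $\Sp(4,\Z) \to \mathrm{SO}(5,\Z)$, which is $\{\pm I\}$ (the $-I$ acts trivially on $\Lambda^2$, and any element acting trivially on $\Lambda^2 H_1$ acts as a scalar $\pm 1$ on $H_1$ itself).

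From this, $\ker(\overline{\sgn} \otimes \lambda) = \ker\lambda \cap \ker\overline{\sgn}$ is contained in $\{\pm I\}$, so it remains only to check the inclusion $\{\pm I\} \subseteq \ker(\overline{\sgn} \otimes \lambda)$, equivalently that $-I \in \ker\overline{\sgn}$, i.e. that $\overline{\sgn}(-I) = 1$. This is where I would be slightly careful: $-I$ lies in $\ker\rho_0 \circ$ (nothing) — rather, I would trace it back to $\M_2$. The hyperelliptic involution $\iota$ satisfies $\rho_0(\iota) = -I$, and $\sgn(\iota)$: since $\iota$ can be written as a product of an odd... — actually the cleanest route is that $\iota$ is a product of Dehn twists along a chain, and one computes $\sgn(\iota) = (-1)^{?}$. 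For genus $2$, $\iota$ is the product of Dehn twists along five curves in the standard chain $c_1,\dots,c_5$ with $\iota = (T_{c_1}T_{c_2}T_{c_3}T_{c_4}T_{c_5})^6$ (the chain relation), so $\sgn(\iota) = ((-1)^5)^6 = 1$. Hence $\overline{\sgn}(-I) = \overline{\sgn}(\rho_0(\iota)) = \sgn(\iota) = 1$, so $-I \in \ker\overline{\sgn}$, and therefore $\{\pm I\} \subseteq \ker(\overline{\sgn}\otimes\lambda)$.

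Combining the two directions gives $\ker(\overline{\sgn}\otimes\lambda) = \{\pm I\}$, which is the claim. The main obstacle I anticipate is pinning down $\ker\lambda$ cleanly: one must argue that an element $A \in \Sp(4,\Z)$ acting trivially on $\Lambda^2 H_1$ forces $A = \pm I$. This can be done elementarily: if $e_1,f_1,e_2,f_2$ is a symplectic basis, then $e_i \wedge e_j$, $e_i\wedge f_j$ ($i\neq j$), $e_i \wedge f_i - e_j \wedge f_j$ span a complement to $\omega\Z$, and requiring $A$ to fix all of these classes (mod $\omega$) forces $A$ to scale each $e_i, f_i$ by a common unit $\pm 1$. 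Alternatively one invokes the standard fact that the $5$-dimensional representation of $\Sp(4)$ has kernel $\{\pm I\}$ and that it is defined over $\Z$ here, so the integral kernel is contained in the algebraic kernel. Either way the computation is short; the only genuine subtlety is the sign bookkeeping for $\sgn(\iota)$, for which the chain relation gives the answer $+1$ immediately.
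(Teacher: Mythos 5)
Your overall strategy is different from the paper's and, once repaired, would work, but as written it contains one genuine gap, and it sits exactly at the point the whole claim turns on. You assert that $\ker{(\overline{\sgn}\otimes\lambda)}=\ker{\overline{\sgn}}\cap\ker{\lambda}$. For a tensor product with a $\{\pm1\}$--valued character this identity is false in general: $X$ lies in $\ker{(\overline{\sgn}\otimes\lambda)}$ if and only if $\lambda(X)=\overline{\sgn}(X)\cdot I$, so the kernel would also contain any $X$ with $\overline{\sgn}(X)=-1$ and $\lambda(X)=-I$ (compare $\sgn\otimes\sgn$ on a symmetric group, whose kernel is the whole group, not the alternating group). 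This is precisely why the paper's proof is structured the way it is: it only concludes $\lambda(X)=\pm I$, passes to $\PSp(4,\Z)$, uses the faithfulness of $\bar\lambda$ to place $\overline{X}$ in $\ctr{(\PSp(4,\Z))}$, and then invokes the triviality of that center. Your route can be patched with one line: $\lambda(X)=-I$ is impossible because $\det\lambda(X)=1$ for every $X$ (the action on $\Lambda^2H_1$ has determinant $(\det X)^3=1$ and is trivial on the invariant sublattice $\omega\Z$, hence has determinant $1$ on the rank--$5$ quotient), whereas $\det(-I)=-1$ there. But that observation must be made; without it the step ``$\ker{(\overline{\sgn}\otimes\lambda)}\subset\ker{\lambda}=\{\pm I\}$'' is unjustified.

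The remainder of your argument is essentially sound and genuinely different from the paper's. Your claim $\ker{\lambda}=\{\pm I\}$ is the same statement as part (2) of Lemma \ref{lemma} (faithfulness of $\bar\lambda$ on $\PSp(4,\Z)$), but you prove it by elementary linear algebra rather than by reduction mod $p$ and the simplicity of $\PSp(4,\F)$ for $p\ge3$; once patched as above, your argument also bypasses part (1) of that lemma entirely, so it is shorter and more self-contained (the paper's method has the advantage of generalizing to $\PSp(2g,\Z)$ and arbitrary tensor subquotients, as remarked at the end of the appendix). Two smaller inaccuracies: the form induced by the wedge pairing does not descend to the quotient $\Lambda^2H_1/\omega\Z$ (since $\omega\wedge\omega\neq0$); it lives on the sublattice $\omega^{\perp}$, which is only rationally isomorphic to the quotient --- harmless for the determinant argument, but worth stating correctly. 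And $\iota\neq(T_{c_1}\cdots T_{c_5})^6$: that product is the identity in $\M_2$ by the chain relation; the hyperelliptic involution is $T_{c_1}T_{c_2}T_{c_3}T_{c_4}T_{c_5}^2T_{c_4}T_{c_3}T_{c_2}T_{c_1}$, a product of ten twists along nonseparating curves, so $\sgn(\iota)=(-1)^{10}=1$ and your conclusion $\overline{\sgn}(-I)=1$ still stands.
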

This claim implies the surjectivity of $h$ as follows.
Note that we have an obvious relation
$$K \subset \ker{( \text{the specialization of $\rho$ at $t=-1$} )}
	= \ker{((\overline{\sgn} \otimes \lambda) \circ \rho_0 )}.$$
Taking the images of the both ends under $\rho_0$,  we have 
$\rho_0( K ) \subset \{ \pm I \}$ by the claim.
On the other hand, 
it is easy to see that $\rho_0( \iota ) = -I$. Therefore, recalling that 
$\iota \in K$ again, we have the equality $\rho_0( \ker{\rho} ) = \{ \pm I \}$.
\par

Now, let $z$ be an arbitrary element of 
$K$. Then, we have either $\rho_0(z) = I$, or $\rho_0(z) = -I$.
In the case of  $\rho_0(z) = I$,  we have $z \in H_0$ and hence 
$z = h( 0, z )$. In the case of $\rho_0(z) = -I$,   take $x = \iota z$.
We then have $ \rho_0(x) = \rho_0(\iota) \cdot \rho_0(x) = ( -I )^2 = I$, 
and hence $x \in H_0$. Since $z = \iota x$, we have $z = h( 1, x )$.
This shows that $h$ is surjective. Therefore, we have proven
that $h$ is an isomorphism. This finishes the proof of Theorem \ref{thm-B}
\end{proof}
\par

We now prove Claim \ref{claim} to complete the proof of  Theorem \ref{thm-B}.
Most essential is  Lemma \ref{lemma} below, the proof of which is postponed until 
Appendix. It is clear that $\{ \pm I \} \subset \ker{( \overline{\sgn} \otimes \lambda )}$. 
To show the converse, 
suppose $X \in \ker{( \overline{\sgn} \otimes \lambda )}$. By the definition of 
$\overline{\sgn}$, we see that $\lambda( X )$ is equal to either $I$ or $-I$. 
On the other hand, it is easy to see that $\lambda ( -I ) = I$, and thus the representation $\lambda$ 
descends to a representation of $\PSp(4, \Z) = \Sp(4, \Z) / \{ \pm I \}$, denoted by 
$$ \bar{\lambda}: \PSp(4, \Z) \to \GL(V). $$
Here, $V$ denotes $\Lambda^2 H_1(\Sigma_2; \Z) / \omega \Z$.
Then, for the element $\overline{X} \in \PSp(4, \Z)$ corresponding to 
$X$, we have 
$\bar{\lambda}( \overline{X} ) \in \ctr{(\GL(V))}$.  Now the lemma is in order.
\begin{lemma} \label{lemma}
\begin{enumerate}
\item  $\PSp(4, \Z)$ has no nontrivial centers.
\item  As a representation of $\PSp(4, \Z)$,  $\bar{\lambda}$ is faithful.
\end{enumerate}
\end{lemma}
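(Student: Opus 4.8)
The plan is to treat the two assertions separately. For (1), I would argue by reduction modulo primes. For a prime $p$ the reduction map $\Sp(4,\Z)\to\Sp(4,\F)$ is surjective and carries $\{\pm I\}$ onto $\{\pm I\}$, so it descends to a surjection $\PSp(4,\Z)\twoheadrightarrow\PSp(4,\F)$. Since $\PSp(4,\F)$ has trivial center for every $p$ --- it is non-abelian simple when $p$ is odd, and $\PSp(4,\F)\cong\mathfrak{S}_6$ when $p=2$ --- any $\bar g\in\ctr(\PSp(4,\Z))$ must map to the identity of $\PSp(4,\F)$; lifting $\bar g$ to $g\in\Sp(4,\Z)$, this means $g\equiv\pm I\pmod p$ for every $p$. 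Reading these congruences entrywise forces each off-diagonal entry of $g$ to be divisible by all primes, hence $0$, and all diagonal entries to be mutually congruent modulo every prime, hence equal to a single integer $c$; as the only integral scalar matrices lying in $\Sp(4,\Z)$ are $\pm I$, we conclude $g=\pm I$ and $\bar g=1$.

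For (2), I would identify $\bar\lambda$ with an integral form of the exceptional isomorphism between the groups of type $C_2$ and $B_2$. The wedge pairing $\Lambda^2 H_1(\Sigma_2;\Z)\times\Lambda^2 H_1(\Sigma_2;\Z)\to\Lambda^4 H_1(\Sigma_2;\Z)\cong\Z$ is non-degenerate and symmetric, $\omega$ is a primitive vector with $\omega\wedge\omega\ne 0$, and hence $V=\Lambda^2 H_1(\Sigma_2;\Z)/\omega\Z$ is a free $\Z$-module of rank $5$ equipped with a non-degenerate symmetric form preserved by $\Sp(4,\Z)$. The resulting homomorphism $\Sp(4,\Z)\to\operatorname{SO}(V)$ is the restriction to integral points of the classical surjection $\Sp(4)\to\operatorname{SO}(5)$ realizing $\Sp(4)$ as the double cover of $\operatorname{SO}(5)$, whose kernel is exactly $\{\pm I\}$. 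Therefore any $g\in\Sp(4,\Z)$ acting trivially on $V$ acts trivially on $V\otimes\Q$ as well, so it lies in that kernel; thus $\ker\lambda=\{\pm I\}$, which is precisely the faithfulness of $\bar\lambda$ on $\PSp(4,\Z)$. For a self-contained version, one can instead take the usual transvection generators of $\Sp(4,\Z)$, record their (readily computed) matrices on an explicit $\Z$-basis of $V$, and verify directly that an element on which $\bar\lambda$ is trivial must be $\pm I$; this is elementary but longer.

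With (1) and (2) in hand, the argument preceding the lemma concludes: for $X\in\ker(\overline{\sgn}\otimes\lambda)$ we already know $\bar\lambda(\overline X)\in\ctr(\GL(V))$, so by (2) the element $\overline X$ centralizes all of $\PSp(4,\Z)$, whence $\overline X=1$ by (1), i.e.\ $X\in\{\pm I\}$, as Claim \ref{claim} requires. I expect (2) to be the real work: it rests on the exceptional isomorphism $\PSp(4)\cong\operatorname{SO}(5)$, so the appendix must make the identification of $\lambda$ with that map precise (or, alternatively, carry out the explicit generator computation), whereas (1) is routine once the center of $\PSp(4,\F)$ is quoted.
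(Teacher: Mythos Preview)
Your argument for (1) is essentially the paper's: reduce modulo primes, use that the finite quotients $\PSp(4,\F)$ are centerless (the paper quotes only the simplicity for $p\ge 3$, which already suffices), and conclude that any central lift is $\pm I$. Your entrywise reading of the congruences is slightly slicker than the paper's pigeonhole on ``infinitely many $\ker k_p$'', but the idea is identical.

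For (2) you take a genuinely different route. The paper does \emph{not} invoke the exceptional isomorphism $\PSp_4\cong\operatorname{SO}_5$; instead it recycles the mod-$p$ trick from (1). Namely, $\lambda$ induces a representation $\bar\lambda_p$ of $\PSp(4,\F)$ on $V\otimes\F$, which is visibly nontrivial (checked on the image of a single Dehn twist), hence faithful by the simplicity of $\PSp(4,\F)$ for $p\ge 3$; then any $X\in\ker\lambda$ satisfies $k_p(X)=\pm I$ for all such $p$, and the argument of (1) finishes. The paper's method has the virtue of using only Proposition~A and of extending verbatim to $\PSp(2g,\Z)$ and to any nontrivial representation defined on a free abelian subquotient of tensor powers of the standard module, as the closing remark of the appendix notes. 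Your approach is more conceptual in genus $2$ but is tied to the accidental $C_2\cong B_2$ and does not generalize in this way.

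One small wrinkle in your write-up: since $\omega\wedge\omega\ne 0$, the wedge pairing on $\Lambda^2 H_1(\Sigma_2;\Z)$ does \emph{not} descend to the quotient $V=\Lambda^2 H_1(\Sigma_2;\Z)/\omega\Z$; the non-degenerate symmetric form lives on the orthogonal complement of $\omega$, which is isomorphic to $V$ only after tensoring with $\Q$. Your conclusion survives because you only need the kernel of the action on $V\otimes\Q$, but the sentence asserting that $V$ itself carries the form should be rephrased accordingly.
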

Then the part (2) of the lemma implies that $\overline{X}$ lies in $\ctr{(\PSp(4, \Z))}$.  
Next, by the part (1), we have $\overline{X} = I$ in $\PSp(4, \Z)$. Hence, we have $X \in \{\pm I \}$.
This completes the proof of Claim \ref{claim}. \qed
\par

\begin{remark}
By the celebrated theorem of Mess \cite{mess}, the Torelli group 
$\I_2$ is an  infinitely generated free group. Therefore,  we can see,
 by Corollary \ref{cor-B}, that $\ker{\bar{\pi}_{[ 2^3 ]}}$ is, if non-trivial, a free group.  On the other hand,  our previous results on the non-triviality 
 of $\rho_{[ 2^3 ]}$ \cite{kasahara:1, kasahara:2} could be 
considered  as showing the non-triviality of $\bar{\pi}_{[ 2^3 ]}$, and hence  of 
$\pi_{[ 2^2, 1 ]}$.
\end{remark}
\par
\begin{remark}
	It seems  quite difficult to determine whether or not the representation 
	$\pi_{[ 2^2, 1 ]}$, and therefore $\rho|_{\I_2}$, is faithful. As an example, 
	let us consider the restriction of $\pi_{[ 2^2, 1 ]}$ to $B_4$. By Proposition
	\ref{fact-2}, this restriction decomposes as 
	$\pi_{[ 2, 1^2 ]} \oplus \pi_{[ 2, 2 ]}$. Then, by a result of Long \cite{long}, 
	this 	direct sum is faithful if and only if  either one of the summands is 
	faithful. On the other hand, it is easy to see that the representation 
	$\pi_{[ 2, 2 ]}$ is {\em not faithful} since $\pi_{[ 2, 2 ]}$  can be 
	expressed as  the composition of $\pi_{[ 2, 1 ]}$ with a certain 
	homomorphism $B_4 \to B_3$ with 
	non-trivial kernel. Therefore, we can see that $\pi_{[ 2^2, 1 ]}$ is 
	faithful on $B_4$ if and only if $\pi_{[ 2, 1^2 ]}$ is faithful. 
	Now, it is well-known that $\pi_{[ 2, 1^2]}$ is equivalent to the 
	tensor product of $\sgn$ and the reduced Burau representation of $B_4$
	(see \cite[Note 5.7]{jones}). One can easily check that tensoring $\sgn$ 
	does not affect the kernel, and so we can conclude that $\pi_{[ 2^2, 1 ]}$ 
	is faithful on $B_4$ if and only if the reduced Burau representation of 
	$B_4$ is faithful. In particular, the unfaithfulness of the reduced Burau 
	representation of $B_4$ implies the unfaithfulness of $\pi_{[2^2, 1]}$ 
	and $\rho|_{\I_2}$.
	We note that the reduced Burau representation of $B_4$  is the only 
	representation among all the reduced Burau 	representations the faithfulness 
	of which remains open.
\end{remark}	
\par
\section*{Appendix---the proof of Lemma \ref{lemma}}
\par

It seems that Lemma \ref{lemma} is  well-known to experts,  but because we are unable to provide
any suitable reference in the literature, we include a proof here.
We will use some well-known properties of $\PSp(4, \F) = \Sp(4, \F)/\{ \pm I \}$,
for the details of which we refer to  \cite{grove}.
Let $p$ be a prime number. Taking the mod $p$ reduction $\Z \to \F$ for each matrix 
entry,  we obtain a homomorphism of group
$$ k_p : \Sp(4, \Z) \to \Sp(4, \F).$$
Since  $k_p( -I ) = -I$, $k_p$ induces a homomorphism 
$\bar{k}_p : \PSp(4, \Z) \to \PSp(4, \F)$.  Note that $\ker{k_p}$ consists of 
those matrices in $\Sp(4, \Z)$ for which  every  diagonal entry is equal to $1$ mod $p$ and every off-diagonal entry is  equal to $0$ mod $p$. Therefore, we can observe that 
the intersection of  $\ker{k_p}$'s, when $p$ varies among  infinitely many arbitrary
primes, consists of the single matrix $I$. It is also well-known that $\Sp(4, \F)$ is 
generated by transvections, and every transvection in $\PSp(4, \F)$ comes from 
the one in $\Sp(4, \Z)$ via $k_p$. Therefore, $k_p$ and hence $\bar{k}_p$ 
are surjective.
\par

Now we prove the first part of the lemma.  For each $X \in \Sp(4, \Z)$, we denote by 
$\overline{X}$ the corresponding element of $\PSp(4, \Z)$. Clearly, every element of 
$\PSp(4, \Z)$ has an expression of this form. Let us choose an arbitrary element 
in $\ctr{(\PSp(4, \Z))}$ and denote it by $\overline{Z}$ with $Z \in \Sp(4, \Z)$. 
Since the homomorphism $\bar{k}_p$ is surjective, $\bar{k}_p (\overline{Z})$ lies 
in $\ctr{(\PSp(4, \F))}$. We appeal to the following classical theorem.
\begin{prop_A} \label{simple_gp}
The finite group $\PSp(4, \F)$ is simple for every prime $p \geq 3$.
\end{prop_A}
In particular, for $p \geq 3$, $\PSp(4, \F)$ has no nontrivial centers.  Therefore, 
we have $\bar{k}_p ( \overline{Z} ) = I$ in $\PSp(4, \F)$. In other words, 
we have $k_p (Z) = I$, or $-I$ for $p \geq 3$. Therefore, either one of $Z$ or 
$-Z$ is contained in infinitely many $(\ker{k_p})\, \text{s'}$. Then the observation above 
implies that $Z=I$, or $-I$. In either case, we have $\overline{Z} = I$ in $\PSp(4, \Z)$.
This proves  the first part of the lemma. 
\par

Next, we proceed to (2).  Recall that the representation  $\bar{\lambda}$  of $\PSp(4, \Z)$ 
is induced by the natural action $\lambda$ of $\Sp(4, \Z)$
on the free abelian group $V = \Lambda^2 H_1(\Sigma_2; \Z)/\omega\Z$,  via the symplectic
representation $\rho_0$.
Let $V_p$ denote $V \otimes \F$. We consider the representation
$\lambda \otimes 1_{\F}: \Sp(4, \Z) \to \GL(V_p)$. Here, $1_{\F}$ denotes the identity 
on $\F$. Clearly, we have $\ker{k_p} \subset \ker{(\lambda \otimes 1_{\F})}$, and 
thus the representation $\lambda \otimes 1_{\F}$ descends to that of $\Sp(4, \F)$, 
denoted by 
$$ \lambda_p : \Sp(4, \F) \to \GL(V_p).$$
Furthermore, recalling that $\lambda(-I)=I$ in $\GL(V)$,  we see that 
$\lambda_p$ descends to a representation  of $\PSp(4, \F)$ denoted by
$\bar{\lambda}_p$. It is easy to check that $\bar{\lambda}_p$ is nontrivial for arbitrary 
prime $p$, for instance by computing the image  under 
${\lambda}_p \circ k_p \circ \rho_0$ of the Dehn twist along any non-separating simple
closed curve.  Then, by 
Proposition A again, $\bar{\lambda}_p$ is a faithful representation 
of $\PSp(4, \F)$ for $p \geq 3$. Therefore, for arbitrary $X \in \ker{\lambda}$, we have $k_p (X) = I$, 
or $-I$. Now the same argument as in (1) implies $\overline{X} = I$ in $\PSp(4, \Z)$.
This proves (2), and hence the lemma. \qed
\par

We remark that the same proof as above works for 
$\PSp(2g, \Z)$ with general $g \geq 2$,  and its nontrivial linear representation
defined naturally on an arbitrary free abelian subquotient of the tensor 
product of copies of the fundamental representation.
\par
\providecommand{\bysame}{\leavevmode\hbox to3em{\hrulefill}\thinspace}
\providecommand{\MR}{\relax\ifhmode\unskip\space\fi MR }

\providecommand{\MRhref}[2]{%
  \href{http://www.ams.org/mathscinet-getitem?mr=#1}{#2}
}
\providecommand{\href}[2]{#2}

\end{document}